\title{Algebraic hyperbolicity of adjoint linear systems on spherical varieties}
\author{Minseong Kwon}
\address{Morningside Center of Mathematics, Academy of Mathematics and Systems Science, Chinese Academy of Sciences, Beijing 100190, China}
\email{minseong@amss.ac.cn}
\author{Haesong Seo}
\address{Department of Mathematical Sciences, KAIST, 291 Daehak-ro, Yuseong-gu, Daejeon 34141, Republic of Korea, and Center for Complex Geometry, Institute for Basic Science (IBS), 55 Expo-ro, Yuseong-gu, Daejeon 34126, Republic of Korea}
\email{hss21@kaist.ac.kr}
\date{August 13, 2025, February 9, 2026 (v2)}
\subjclass[2020]{32Q45, 14J70, 14M27} 
\keywords{algebraic hyperbolicity, adjoint linear system, spherical variety}
\newtheorem{thm}{Theorem}[section]
\newtheorem{coro}[thm]{Corollary}
\newtheorem{lemma}[thm]{Lemma}
\newtheorem{prop}[thm]{Proposition}
\newtheorem{conj}[thm]{Conjecture}
\newtheorem{defnthm}[thm]{Definition-Theorem}
\theoremstyle{definition}
\newtheorem{defn}[thm]{Definition}
\newtheorem{notation}[thm]{Notation}
\newtheorem{construction}[thm]{Construction}
\theoremstyle{remark}
\newtheorem{rmk}[thm]{Remark}
\def\BigRoman{\uppercase\expandafter{\romannumeral\number\count 255}}
\def\Romannumeral{\afterassignment\BigRoman\count255=}
\DeclareMathAlphabet{\mathpzc}{OT1}{pzc}{m}{it}
\def \CC {\mathbb{C}}
\def \PP {\mathbb{P}}
\def \ZZ {\mathbb{Z}}
\def \Ocal {\mathcal{O}}
\def \Scal {\mathcal{S}}
\def \Ucal {\mathcal{U}}
\def \Vcal {\mathcal{V}}
\def \Ycal {\mathcal{Y}}
\def \hbar {\bar{h}}
\begin{document}
    
\begin{abstract}
    Moraga and Yeong conjectured that for a smooth complex projective variety $X$ of dimension $n$, an ample line bundle $A$ on $X$ and an integer $m \ge 3 n + 1$, very general elements of the adjoint linear system $|\omega_{X} \otimes A^{\otimes m}|$ are algebraically hyperbolic.
    We prove the conjecture for spherical varieties with smooth orbit closures.
    As a corollary, we conclude that the conjecture holds for horospherical varieties, and for toroidal spherical varieties.
    Furthermore, for any spherical variety, we show that the conjecture holds modulo the complement of an open dense orbit.
\end{abstract}

\maketitle

\section{Introduction}

We are working in the category of algebraic varieties over $k$, an uncountable and algebraically closed field of characteristic $0$.
Algebraic hyperbolicity is an algebro-geometric analogue of Kobayashi hyperbolicity, introduced by Demailly \cite{Demailly97}.
Roughly, for a smooth projective variety $S$, as Kobayashi hyperbolicity is equivalent to non-existence of entire curves on $S$, algebraic hyperbolicity requires a lower bound for genera of complete curves on $S$ so that $S$ contains no rational or elliptic curve
(see Definition \ref{defn: hyp} for the precise definition).

It is well known that very general hypersurfaces $S$ on a smooth projective variety $X$ are algebraically hyperbolic, if their degree is sufficiently large (see \cite{Demailly20} and the references therein).
In general, it is a challenging problem to determine an optimal lower bound for the degree that guarantees algebraic hyperbolicity. Recently, after the work of Hasse and Ilten \cite{HI2021} for the case where $X$ is a toric 3-fold, there has been a significant progress on algebraic hyperbolicity of hypersurfaces on almost homogeneous varieties. Namely, Coskun and Riedl \cite{CR2023} introduced a machinery for checking algebraic hyperbolicity of very general hypersurfaces on almost homogeneous varieties, based on the seminal works of Clemens \cite{Clemens1986}, Ein \cite{Ein1988, Ein1991}, Voisin \cite{Voisin1996, Voisin1998}, and Pacienza \cite{Pacienza2003}.
Using their technique, Yeong \cite{Yeong2022} considered the case where $X$ is the product of the projective spaces, Mioranci \cite{Mioranci2023} studied the case where $X$ is homogeneous, and the second author \cite{Seo2025} investigated the case where $X$ is a general Fano 3-fold of Picard number 1 (based on Mukai's vector bundle method).

Our project is motivated by the recent work of Moraga and Yeong \cite{MY2024} who considered adjoint linear systems on toric varieties. In fact, inspired by Fujita's conjecture on positivity of adjoint line bundles, they conjectured the following:

\begin{conj}[{\cite{MY2024}}] \label{conj: adjoint bundles}
    Let $X$ be a smooth projective variety of dimension $n \ge 2$, and $A$ an ample line bundle on $X$.
    For all $m \ge 3n+1$, very general elements of the adjoint linear system $|\omega_{X} \otimes A^{\otimes m}|$ are algebraically hyperbolic.
\end{conj}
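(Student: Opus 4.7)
The plan is to translate algebraic hyperbolicity of a very general $S \in |L|$, where $L := \omega_X \otimes A^{\otimes m}$, into a global positivity statement on the universal family. Let $B \subset |L|$ parametrize smooth members and let $\rho : \mathcal{S} \to B$ and $\pi : \mathcal{S} \to X$ denote the two projections from the total space. Adjunction gives $\omega_{S_b} = (\omega_X^{\otimes 2} \otimes A^{\otimes m})|_{S_b}$ and $N_{S_b/X} = L|_{S_b}$. It suffices to produce $\varepsilon > 0$ such that, for very general $b \in B$ and every reduced irreducible curve $C \subset S_b$ with normalisation $\nu : \tilde{C} \to C$, one has $2 g(\tilde{C}) - 2 \ge \varepsilon \, (A \cdot C)$. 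Phrased on the universal curve of putative low-genus components, this is the claim that the relative dualising sheaf dominates a fixed positive pullback from $X$.

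Following the Clemens--Ein--Voisin--Pacienza framework, I would split curves in fibres of $\rho$ into those that sweep out a divisor in $\mathcal{S}$ as $b$ varies and those that are rigid in their fibre. The ``moving'' case reduces to positivity of $L$ against the normal direction of $\mathcal{S}$ in $X \times B$ and is essentially formal once one uses that $N_{\mathcal{S}/X \times B} = \rho^{*} \mathcal{O}_B(1)$. The rigid case is the heart of the matter: here one must construct, on an open dense subset of $\mathcal{S}$, global sections of the twisted relative tangent sheaf $T_{\mathcal{S}/B} \otimes \pi^{*} A^{-k}$ (for a small integer $k$ depending on $n$) with prescribed vanishing along the universal jet of $C$. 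Pairing such a section with the tangent field of $\tilde{C}$ produces a non-trivial differential relation which, once iterated to sufficiently high order on the jet bundles of $\mathcal{S}$, forces the desired lower bound on $g(\tilde{C})$.

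The construction of these twisted sections reduces to two global positivity problems on $X$: (i) separation of $(n+1)$-jets of $L$ along $S_b$, and (ii) global generation of $T_X$ after a bounded twist by $A$. Task (i) is a Fujita-type statement that follows from an Angehrn--Siu argument once $m \ge 2n+1$. Task (ii) is the decisive obstacle and the only point where the spherical hypothesis of the present paper is essential: the group action then makes $T_X$ itself globally generated, so (ii) is free. For a general smooth projective $X$ no twisted global vector fields are known to exist in the required range, and producing them is essentially equivalent to a strong form of Fujita's very-ampleness conjecture for $T_X$. I would therefore attempt to bypass (ii) by replacing $T_X$ with the sheaf of principal parts $\mathcal{P}^{k}(L)$: the ampleness of $L$ for $m \ge 3n+1$ gives $\mathcal{P}^{k}(L)$ enough global sections, and these descend through the universal evaluation map $H^{0}(X, L) \otimes \mathcal{O}_{\mathcal{S}} \to \pi^{*} L$ to sections of the jet bundles of $\mathcal{S}$. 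The numerical threshold $m \ge 3n+1$ is then precisely what is required for the corresponding Nadel multiplier ideal along the universal locus of low-genus curves to be trivial on a very general fibre; verifying this vanishing without any homogeneity hypothesis on $X$ is the step I expect to be genuinely hard and is where, at present, the conjecture remains open.
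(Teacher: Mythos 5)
The statement you are trying to prove is stated in the paper as a \emph{conjecture} (due to Moraga and Yeong), and the paper never claims a proof of it in this generality: it only establishes the cases where $X$ is spherical with smooth $G$-orbit closures (plus a ``modulo the boundary'' version for arbitrary spherical $X$). Your proposal, by your own admission in the final sentence, also does not close the argument: the decisive step --- producing, for an arbitrary smooth projective $X$, the twisted sections (equivalently, the vanishing of the relevant multiplier ideal along the universal locus of low-genus curves) in the range $m \ge 3n+1$ --- is left open, and it is exactly the point where no known technique applies without some homogeneity or group-theoretic input. So this is an honest strategy sketch in the Clemens--Ein--Voisin--Pacienza style, not a proof; the gap is the one you yourself identify, and it is genuine.

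Two further corrections are worth making. First, your parenthetical claim that for the spherical case ``the group action then makes $T_X$ itself globally generated'' is false: a spherical variety is only almost homogeneous, so $\mathfrak{g}$ generates $T_X$ only on the open orbit (already for the Hirzebruch surface $\FF_2$, a toric and hence spherical surface, $T_X$ fails to be globally generated along the $(-2)$-curve). Second, the mechanism the paper actually uses in the spherical case is different from what you sketch: it does not twist $T_X$ or work with jets and multiplier ideals at all. It runs the Coskun--Riedl variational method, in which the whole problem reduces to nefness of the twisted kernel bundle $M_L \otimes A$, where $M_L = \ker\bigl(H^0(X,L)\otimes \Ocal_X \to L\bigr)$, together with the generic surjection $f^{*}M_L \to N_{f/S}$ for curves meeting the open orbit; that nefness is then proved by restricting to the finitely many $B$-invariant curves, using surjectivity of $H^0(X,L)\to H^0(C,L|_C)$ (established via Brion's classification of $B$-curves and Frobenius splitting) and a spherical Nakai-type criterion for $B$-equivariant bundles. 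The smoothness of orbit closures enters only to pass from ``hyperbolic modulo the boundary'' to genuine hyperbolicity by induction on orbit closures; none of this extends to arbitrary smooth projective $X$, which is why the general statement remains a conjecture both in the paper and after your attempt.
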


Note that the lower bound $3n+1$ cannot be lowered by the case $(X,\, A) = (\PP^{2},\, \Ocal_{\PP^{2}}(1))$ (since in this case, $\omega_{X} \otimes A^{\otimes 3n} \simeq \Ocal_{\PP^{2}}(3)$).

As pointed out by Moraga and Yeong \cite[Example 5.2]{MY2024}, it is known that Conjecture \ref{conj: adjoint bundles} holds for some rational homogeneous spaces, by the results of Ein \cite{Ein1988} (for the projective spaces) and of Mioranci \cite{Mioranci2023} (for Grassmannians and their products).
Moreover, Moraga and Yeong \cite{MY2024} proved Conjecture \ref{conj: adjoint bundles} for smooth toric varieties.
It is worth observing that one can discuss Conjecture~\ref{conj: adjoint bundles} even when $X$ has mild singularities.
In fact, in \cite{MY2024}, Conjecture~\ref{conj: adjoint bundles} is verified for Gorenstein toric 3-folds, and for any Gorenstein toric varieties, Conjecture~\ref{conj: adjoint bundles} is proven {\lq modulo toric divisors\rq} (Definition~\ref{defn: hyp}).

We investigate Conjecture \ref{conj: adjoint bundles} for a larger class of almost homogeneous varieties, called spherical varieties.
Here, for a connected reductive group $G$, a normal $G$-variety $X$ is called \emph{spherical} if a Borel subgroup of $G$ has an open dense orbit in $X$.
For example, rational homogeneous spaces and toric varieties are spherical.
For more examples, we refer to \cite[Chapter 5]{Timashev}.

Our main result is the following theorem:

\begin{thm}\label{main thm: conj holds}
    Let $G$ be a connected reductive group, and $X$ a smooth projective spherical $G$-variety of dimension $n \ge 2$.
    If all $G$-orbit closures in $X$ are smooth, then Conjecture \ref{conj: adjoint bundles} holds for $X$.
\end{thm}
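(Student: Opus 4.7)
The proof should follow the Coskun--Riedl strategy \cite{CR2023} used by Moraga--Yeong in the toric setting, with an induction on the dimension of the $G$-orbit stratification; the smoothness of all $G$-orbit closures is exactly what makes this induction go through cleanly. Concretely, fix an ample $A$, an integer $m \ge 3n+1$, a very general $D \in |\omega_{X} \otimes A^{\otimes m}|$, and an irreducible curve $C \subset D$. Let $Y \subseteq X$ be the smallest $G$-orbit closure containing $C$; by hypothesis $Y$ is smooth and is itself a spherical variety of some dimension $k \le n$, and $C$ meets the open $G$-orbit $Y^{\circ}$ of $Y$.

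The first step transports the adjoint positivity to $Y$. Since $X$ is smooth with smooth orbit closures, $Y$ is locally a transverse intersection of $B$-stable prime divisors through $Y$, and correspondingly $N_{Y/X}$ decomposes as a direct sum of line bundles associated to those divisors. Adjunction gives
\[
(\omega_{X} \otimes A^{\otimes m})|_{Y} \simeq \omega_{Y} \otimes (\det N_{Y/X})^{\vee} \otimes (A|_{Y})^{\otimes m}.
\]
Using the combinatorial description of Cartier divisors on spherical varieties to bound the divisor contributions to $(\det N_{Y/X})^{\vee}$ against $A|_{Y}$, this restricted bundle can be rewritten as $\omega_{Y} \otimes A_{Y}^{\otimes m'}$ for an ample $A_{Y}$ on $Y$ with $m' \ge 3k+1$; the lower bound $m \ge 3n+1$ is calibrated precisely so that this adjoint form persists under restriction across the $n-k$ divisors contributing to $N_{Y/X}$.

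The second step is the Coskun--Riedl deformation analysis on $Y$. On the open orbit $Y^{\circ} \simeq G/H$, the action map gives a surjection $\gfr \otimes \Ocal_{Y^{\circ}} \twoheadrightarrow T_{Y^{\circ}}$; pulling this back to $C$ produces an abundance of sections of $T_{Y}|_{C}$, hence of the normal bundle $N_{C/(D \cap Y)}$. Combined with the genericity of $D$ and the positivity established in the previous step, the standard normal-bundle argument from \cite{CR2023} (executed for the toric case by Moraga--Yeong) yields a linear genus bound of the form $2g(C) - 2 \ge \epsilon \, (A \cdot C)$ with $\epsilon > 0$, which is the desired algebraic hyperbolicity. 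When $k < n$, this step is essentially the inductive hypothesis applied to the smooth spherical variety $Y$ of smaller dimension.

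The main obstacle is the bookkeeping in the first step: ensuring that after restricting the adjoint bundle to an orbit closure of arbitrary codimension, the result remains of adjoint type with multiplier $\ge 3k+1$. The smoothness of orbit closures is essential precisely here, since it guarantees a clean splitting of $N_{Y/X}$ into line bundles along $B$-stable divisors, making the adjunction computation transparent; this is why the conjecture is subtler for general spherical varieties, and why only the complement of the open orbit can be handled in the general case. A secondary difficulty is porting the Moraga--Yeong deformation argument from torus actions to reductive $G$-actions; this proceeds through the surjection $\gfr \otimes \Ocal_{Y^{\circ}} \twoheadrightarrow T_{Y^{\circ}}$, which plays the role that the torus Lie algebra does in the toric setting.
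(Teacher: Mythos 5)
Your overall skeleton --- pass to the smallest $G$-orbit closure $Y$ containing the curve, use its smoothness, and run the Coskun--Riedl analysis on $Y$ where the curve meets the open orbit --- matches the paper. But your first step contains a genuine gap. You restrict the adjoint bundle via adjunction, $(\omega_{X}\otimes A^{\otimes m})|_{Y}\simeq \omega_{Y}\otimes(\det N_{Y/X})^{\vee}\otimes (A|_{Y})^{\otimes m}$, and then assert that the factor $(\det N_{Y/X})^{\vee}\otimes (A|_{Y})^{\otimes m}$ can be rewritten as $A_{Y}^{\otimes m'}$ with $A_{Y}$ ample and $m'\ge 3k+1$, claiming the bound $3n+1$ is ``calibrated'' for this. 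That is unsubstantiated and false in general: $\det N_{Y/X}$ is the restriction of an effective sum of $G$-stable divisors, and how negative $(\det N_{Y/X})^{\vee}$ is relative to $A$ depends on $X$ and $A$, not only on $n$ and $k$; there is no uniform trade of one power of $A$ per divisor. The paper avoids this entirely by proving a stronger statement (Theorem \ref{main thm: hyperbolic if orbit closures are smooth}): for $N$ nef and $A$ ample, very general members of $|N\otimes A^{\otimes m}|$ are hyperbolic for $m\ge 2n$. The class ``nef tensor $m$-th power of ample'' restricts trivially to $Y$ with the \emph{same} $m\ge 2n\ge 2\dim Y$, so no adjunction bookkeeping is needed; Theorem \ref{main thm: conj holds} then follows from the decomposition $\omega_{X}\otimes A^{\otimes m}\simeq(\omega_{X}\otimes A^{\otimes(n+1)})\otimes A^{\otimes(m-n-1)}$ and Fujita-type nefness of $\omega_{X}\otimes A^{\otimes(n+1)}$.

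Two further inputs are missing from your sketch and cannot be waved through. First, to apply the variational argument on $Y$ you must know that a very general member of $|L|$ restricts to a very general member of $|L|_{Y}|$; this requires surjectivity of $H^{0}(X,L)\to H^{0}(Y,L|_{Y})$ for $G$-invariant $Y$ (Theorem \ref{thm:facts-on-spherical}(3)). Second, the ``standard normal-bundle argument'' you invoke needs, beyond the generic surjection $f^{*}M_{L}\to N_{f/S}$ coming from the group action, the nefness of the twisted kernel bundle $M_{L}\otimes A$. For spherical varieties this is the technical heart of the paper (Lemma \ref{lem:positivity-of-kernel-bundle}), resting on a spherical Nakai criterion and on surjectivity of restriction of sections of ample line bundles to $B$-curves, proved via Brion's classification of $B$-curves and Frobenius splitting. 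Your proposal does not identify this step, and without it the genus bound does not follow.
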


As a corollary, we verify Conjecture \ref{conj: adjoint bundles} for two important subclasses of spherical varieties.

\begin{coro} \label{coro: hyperbolic if horosph or regular}
    Conjecture \ref{conj: adjoint bundles} holds, provided that either
    \begin{enumerate}
        \item $X$ is \emph{horospherical}; or
        \item $X$ is spherical and \emph{toroidal}.
    \end{enumerate}
\end{coro}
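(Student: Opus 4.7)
The plan is to deduce both parts of Corollary~\ref{coro: hyperbolic if horosph or regular} directly from Theorem~\ref{main thm: conj holds}: it suffices to show that in each of the two classes listed, the hypothesis of that theorem is automatic, i.e.\ every $G$-orbit closure of $X$ is smooth. Thus the corollary reduces to two independent smoothness statements about orbit closures.

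For the toroidal case, I would appeal to the local structure theorem for toroidal spherical varieties (Brion--Luna--Vust; see \cite[Chapter~5]{Timashev}). For each $G$-orbit $Y \subset X$ there is a $B$-stable affine open neighborhood $X_{Y,B}$ of $Y$ and a $P$-equivariant isomorphism $R_u(P) \times Z \xrightarrow{\sim} X_{Y,B}$, where $P = P(Y)$ is the stabilizer parabolic and $Z \subset X_{Y,B}$ is a toric slice under a quotient of a Levi subgroup $L \subset P$. Because $X$ is toroidal (no color contains a $G$-orbit closure), this isomorphism matches the $G$-orbit closures of $X$ meeting $X_{Y,B}$ with the $T$-orbit closures in $Z$. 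Smoothness of $X$ forces smoothness of $Z$, and smooth toric varieties have smooth torus-orbit closures; chasing the isomorphism shows that every $G$-orbit closure of $X$ is smooth.

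For the horospherical case, the argument is combinatorial. Horospherical $G$-varieties are classified by colored fans whose colors lie in a fixed set determined by the parabolic $P = N_G(H)$ of the open orbit. The $G$-orbit closures of $X$ are in bijection with the colored cones of its fan, and each such closure is itself a horospherical $G$-variety obtained by passing to a colored face. The smoothness criterion for a horospherical variety translates to a cone-by-cone condition that is preserved under taking colored faces, so smoothness of $X$ forces smoothness of every $G$-orbit closure. Equivalently, one can note that a smooth horospherical variety admits a $G$-equivariant affine-local description as a product of a flag variety with a smooth toric variety, after which the smoothness of orbit closures reduces to the toric fact used in the previous paragraph.

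The principal technical obstacle I anticipate is verifying, in the toroidal case, that the local structure isomorphism induces a bijection of orbit closures that is compatible with scheme-theoretic closures and intersections, and in the horospherical case, that the combinatorial smoothness condition on a colored cone descends to each colored face. Both are standard in the Luna--Vust theory and should follow immediately from the spherical-variety background developed in the body of the paper; the heavy lifting has already been done in Theorem~\ref{main thm: conj holds}.
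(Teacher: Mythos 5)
Your proposal is correct and follows essentially the same route as the paper: reduce to Theorem~\ref{main thm: conj holds} by showing that smooth horospherical (resp.\ smooth toroidal spherical) varieties have smooth $G$-orbit closures. The paper simply cites \cite[Proposition 2.17]{pasquier-thesis} and \cite[Corollary 3.4.3]{Perrin-sanya} for these two facts, which are exactly the statements you sketch proofs of via the local structure theorem and the colored-fan description.
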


For the definitions of horospherical varieties and toroidal spherical varieties, we refer to Definitions \ref{defn: horo} and \ref{defn: toroidal}, respectively.
Roughly, horospherical varieties are spherical varieties birational to torus bundles over rational homogeneous spaces, and Pasquier \cite[Proposition 2.17]{pasquier-thesis} showed that every $G$-orbit closure in a smooth horospherical variety is smooth.
On the other hand, toroidal spherical varieties are spherical varieties locally isomorphic to the products of toric varieties and affine spaces, and hence every $G$-orbit closure in a smooth toroidal spherical variety is smooth, see for example \cite[Corollary 3.4.3]{Perrin-sanya}.
In particular, we conclude that Conjecture \ref{conj: adjoint bundles} is true for
\begin{itemize}
    \item rational homogeneous spaces (including the examples of \cite[Example 5.2]{MY2024});
    \item smooth toric varieties (which were considered by Moraga and Yeong \cite{MY2024}); and
    \item de Concini--Procesi compactification of symmetric varieties \cite{dCP83}, or more generally, wonderful varieties \cite[\S30]{Timashev}.
\end{itemize}

To show Theorem \ref{main thm: conj holds}, indeed we prove a stronger statement, formulated as follows:

\begin{thm}\label{main thm: hyperbolic if orbit closures are smooth}
    Let $G$ be a connected reductive group, and $X$ a smooth projective spherical $G$-variety of dimension $n \ge 2$.
    Suppose that $N$ is a nef line bundle and $A$ is an ample line bundle on $X$.
    If all $G$-orbit closures in $X$ are smooth, then for any $m \ge 2n$, very general elements of the linear system $|N \otimes A^{\otimes m}|$ are algebraically hyperbolic.
\end{thm}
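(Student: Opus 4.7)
The plan is to implement the Clemens--Voisin--Pacienza degeneration method as adapted to almost homogeneous varieties by Coskun--Riedl. Assume for contradiction that a very general $S \in |N \otimes A^{\otimes m}|$ is not algebraically hyperbolic: then there is an irreducible $B \subset |N \otimes A^{\otimes m}|$ dominating the linear system, a family of hypersurfaces $\Scal_b \subset X$ over $B$, and a family of curves $C_b \subset \Scal_b$ whose normalizations $\widetilde{C}_b$ have genus uniformly small relative to $C_b \cdot A$. A standard degree-comparison argument on the resulting incidence correspondence reduces the desired contradiction to a generation statement: the universal hypersurface should carry enough global sections of a controlled twist of its tangent sheaf to cover the family $C_b$ generically.

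The principal input is the surjectivity of the infinitesimal-action map $\gfr \otimes \Ocal_X \to T_X$ on the open $G$-orbit $\Omega \subset X$. Combined with global sections of suitable powers of $A$, this yields, on $\Omega$, enough sections of $T_X \otimes A^{\otimes k}$ (with $k$ controlled linearly in $n$) to generate the sheaf. Lifting these sections to the universal hypersurface and restricting to $C_b$ produces the positivity needed to run the Coskun--Riedl machinery.

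The main obstacle is the non-open $G$-orbits, which together form the boundary of $\Omega$ in $X$. On a $G$-orbit closure $Y \subsetneq X$, the $\gfr$-image lies in $T_Y \subset T_X|_Y$ and misses all normal directions, so the $G$-action alone cannot generate $T_X$ there. The smoothness hypothesis on every $G$-orbit closure is precisely what enables a descending induction on orbit dimension: when $Y$ is smooth, the tangent--normal sequence
\[
0 \to T_{Y} \to T_X|_Y \to N_{Y/X} \to 0
\]
is an exact sequence of vector bundles, and the normal bundle $N_{Y/X}$ can be analyzed cleanly. Using defining sections of ample twists that cut out $Y$ inside the next-larger orbit closure, together with products of $\gfr$-vector fields with global sections of $A$, one constructs sections of $N_{Y/X} \otimes A|_Y^{\otimes k_Y}$ that globally generate the normal bundle, with $k_Y$ linear in $\codim_X Y$. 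The smooth orbit closure $Y$ is itself a spherical variety all of whose $G$-orbit closures are smooth, so the inductive generation of $T_Y \otimes A|_Y^{\otimes k_Y'}$ is available; the tangent--normal sequence then gives generation of $T_X|_Y \otimes A|_Y^{\otimes k_Y''}$. Keeping the accumulated twist $k_Y''$ linear in $n$, uniformly across the stratification, is the crux of the bookkeeping and drives the final sharp bound.

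Finally, one compares the global twisted-tangent sections produced above with the infinitesimal deformations of the hypersurface, restricts everything to $C_b$, and applies the generic rank-maximality argument on the incidence variety. Nefness of $N$ contributes only via the nonnegative intersection $N \cdot C_b \geq 0$, and the resulting inequality forces $(m - 2n)(C_b \cdot A) \leq 2g(\widetilde{C}_b) - 2 + (\text{nonnegative terms})$, contradicting the assumed failure of hyperbolicity as soon as $m \geq 2n$.
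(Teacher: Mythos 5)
Your overall framework (Clemens--Ein--Voisin positivity fed into the Coskun--Riedl incidence argument) is the right family of techniques, but the two places where your sketch departs from what is actually needed are precisely the places where it breaks. First, the positivity input. The sharp bound $m\ge 2n$ requires the inequality $\deg_C N_{f/S}\ge -(n-2)\deg_C f^*A$, which in the paper comes from the fact that the twisted kernel bundle $M_L\otimes A$ is nef (Lemma~\ref{lem:positivity-of-kernel-bundle}), where $M_L=\ker\bigl(H^0(X,L)\otimes\Ocal_X\to L\bigr)$, combined with the generic surjection $f^*M_L\to N_{f/S}$ of Lemma~\ref{lem:variational-method}; the $-(n-2)$ is the rank of $N_{f/S}$. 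Proving that nefness is the technical heart of the paper: it needs the surjectivity of the restriction $H^0(X,L)\to H^0(C,L|_C)$ for every irreducible complete $B$-invariant curve $C$ (Theorem~\ref{thm: res for ample is surjective}, proved via Brion's classification of $B$-curves into types $(\chi)$, (U), (T), (N) and Frobenius splitting of parabolic inductions of symmetric varieties), together with a spherical Nakai criterion reducing nefness of a $B$-equivariant bundle to its nefness on $B$-curves (Theorem~\ref{thm:spherical-Nakai-criterion}). Your substitute --- generating $T_X\otimes A^{\otimes k}$ from the infinitesimal action $\gfr\otimes\Ocal_X\to T_X$ plus sections of powers of $A$ --- neither produces the lifted vector fields on the universal hypersurface (arbitrary sections of a twisted tangent sheaf do not act on $H^0(X,L)$ the way the $\gfr$-fields do) nor controls the twist well enough to recover $m\ge 2n$; no analogue of the restriction-surjectivity theorem appears anywhere in your outline.

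Second, the boundary. The generic surjection onto $N_{f/S}$ genuinely requires $f(C)$ to meet the open orbit: it is produced by $G$-translating the pair $(C,S)$, and if $C$ lies in an orbit closure $Y\subsetneq X$ the translates never leave $Y$, so no amount of global generation of $N_{Y/X}\otimes A^{\otimes k_Y}$ restores it --- sections of the normal bundle of $Y$ in $X$ do not give deformations of $C$ that remain inside $S$. Moreover, your proposed construction of those sections from defining equations of $Y$ actually yields sections of the twisted \emph{conormal} bundle $N_{Y/X}^{\vee}\otimes A^{\otimes d}$, not of $N_{Y/X}$, and orbit closures need not be complete intersections. The paper's use of the smoothness hypothesis is entirely different and far simpler: let $Y$ be the smallest $G$-orbit closure containing $f(C)$; it is smooth, spherical, of dimension $\le n$, the restriction $H^0(X,L)\to H^0(Y,L|_Y)$ is surjective by Theorem~\ref{thm:facts-on-spherical}(3) (so a very general $S$ restricts to a very general member of $|L|_Y|$), and $f(C)$ meets the open orbit of $Y$, so the ``modulo boundary'' inequality of Theorem~\ref{main thm: pseudo hyperbolic} applies on $Y$ with the even better constant $m-2\dim Y+1$. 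Your descending induction through normal-bundle sequences is not needed and, as sketched, does not close.
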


Observe that Theorem \ref{main thm: hyperbolic if orbit closures are smooth} implies Theorem \ref{main thm: conj holds}, since
\begin{equation} \label{eqn:decomp adjoint}
    \omega_{X} \otimes A^{\otimes m} \simeq \left( \omega_{X} \otimes A^{\otimes (n+1)} \right) \otimes A^{\otimes (m-(n+1))},
\end{equation}
and $\omega_{X} \otimes A^{\otimes (n+1)}$ is nef (\cite[Example 1.5.35]{LazarsfeldI}).
In fact, since $X$ is spherical, $\omega_{X} \otimes A^{\otimes (n+1)}$ is furthermore globally generated (cf. Theorem~\ref{thm:facts-on-spherical}(3)), as expected by Fujita's conjecture which predicts that the global generation holds true for any smooth projective variety.
We refer to \cite[\S10.4.A]{LazarsfeldII} for a detailed discussion on Fujita's conjecture.

To prove Theorem \ref{main thm: hyperbolic if orbit closures are smooth}, we use the technique of Coskun and Riedl \cite{CR2019, CR2023}.
From this point of view, one may observe that the nefness of the twisted kernel bundle (Definition \ref{defn:kernel}) of $N \otimes A^{\otimes m}$ is closely related to the desired statement.
To obtain its nefness, we prove the following two Theorems~\ref{thm: res for ample is surjective} and \ref{thm:spherical-Nakai-criterion}, which have their own interest and hold true even without the uncountability of the base field $k$ and the smoothness of the spherical variety $X$.
The first is an analogue of the results of Mehta and Ramanathan \cite[Theorem~3]{MR85} and of Brion and Inamdar \cite[p.~218]{BrionInamdar} (see also Theorem~\ref{thm:facts-on-spherical}(3)).
In the following, for an algebraic group $B$ acting on $X$, we say that a curve $C$ on $X$ is a \emph{$B$-curve} if $C$ is invariant under the $B$-action.

\begin{thm} \label{thm: res for ample is surjective}
    Assume that $k$ is an algebraically closed field of characteristic $0$.
    Let $G$ be a connected reductive group, and $B$ a Borel subgroup of $G$.
    Let $X$ be a projective spherical $G$-variety, and $A$ an ample line bundle on $X$.
    If $C$ is an irreducible complete $B$-curve on $X$, then the restriction map $H^{0}(X,\, A)\rightarrow H^{0}(C,\, A|_{C})$ is surjective.
\end{thm}

We obtain Theorem~\ref{thm: res for ample is surjective} as a consequence of Brion's classification of $B$-curves \cite{Brion1993, Brion97} and a characteristic $p$ method, the so-called Frobenius splitting, utilized as a main tool in the aforementioned results \cite{MR85, BrionInamdar}.

The second ingredient for the nefness of the twisted kernel bundle is a spherical version of the Nakai criterion for $B$-equivariant vector bundles.
We prove it by using the argument of Hering, Musta\c t\u a and Payne \cite[Theorem~2.1]{HMP2010} for the proof of the toric Nakai criterion.
The precise statement is as follows:

\begin{thm}\label{thm:spherical-Nakai-criterion}
    Assume that $k$ is an algebraically closed field of characteristic $0$.
    Let $G$ be a connected reductive group, and $B$ a Borel subgroup of $G$.
    Let $X$ be a complete spherical $G$-variety, and $E$ a $B$-equivariant vector bundle on $X$.
    \begin{enumerate}
        \item $E$ is nef on $B$-curves on $X$ if and only if $E$ is nef on $X$.

        \item Assume further that $X$ is projective.
        $E$ is ample on $B$-curves on $X$ if and only if $E$ is ample on $X$.
    \end{enumerate}
\end{thm}

In the process of the proof of Theorem \ref{main thm: hyperbolic if orbit closures are smooth}, we show that even in the case where the orbit closures are not necessarily smooth, very general elements of $|N \otimes A^{\otimes m}|$ are algebraically hyperbolic {\lq modulo the boundary.\rq}
As a consequence, Conjecture \ref{conj: adjoint bundles} holds {\lq modulo the boundary\rq} for any smooth spherical variety.
See Theorem \ref{main thm: pseudo hyperbolic} and Corollary \ref{coro: conj modulo boundary} for details.

This paper is organized as follows.
In Section \ref{sec:spherical-geometry}, we study a general theory of spherical varieties.
In Section \ref{subsection:pre}, we recall known results on spherical varieties which are used in sequel, especially Brion's classification of $B$-curves \cite{Brion1993, Brion97}.
In Section \ref{subsection: F splitting}, using the Frobenius splitting, we prove Theorem \ref{thm: res for ample is surjective}.
In Section \ref{subsection: Nakai}, we prove Theorem \ref{thm:spherical-Nakai-criterion}, and then show that the twisted kernel bundle is nef (Lemma \ref{lem:positivity-of-kernel-bundle}).
Section \ref{sec:hyperbolicity-conjecture} is devoted to the proof of our main theorem, Theorem \ref{main thm: hyperbolic if orbit closures are smooth}.
In Section \ref{subsection: coskun-riedl}, we summarize the technique of Coskun and Riedl \cite{CR2019, CR2023}.
Finally, in Section \ref{sec: Proof}, we complete the proof of Theorem \ref{main thm: hyperbolic if orbit closures are smooth}.

\subsection*{Conventions}
    We are working over $k$, an algebraically closed field of characteristic $p \ge 0$.
    We shall assume that $p = 0$ from Section \ref{subsection: Nakai}, and $k$ is furthermore uncountable in Section \ref{sec:hyperbolicity-conjecture}.
    By a variety, we mean an integral separated scheme of finite type over $k$.
    For the projectivization, we use Grothendieck's notation, that is, $\PP(V)$ parametrizes one-dimensional quotients of a vector space/bundle $V$.
    We say that a vector bundle $V$ is nef (resp. ample) if the line bundle $\Ocal_{\PP(V)}(1)$ on $\PP(V)$ is nef (resp. ample).

\subsection*{Acknowledgments}

The authors would like to thank Michel Brion, Izzet Coskun, Donggun Lee and Yongnam Lee for detailed comments on the draft of this paper.
The authors are grateful to Jaehyun Hong for introducing enlightening references.
The authors also would like to thank the anonymous referee for very helpful suggestions to improve this paper.
This work was supported by the Institute for Basic Science (IBS-R032-D1).

\section{Spherical geometry} \label{sec:spherical-geometry}

In this section, we discuss geometry of spherical varieties.
Namely, using Brion's classification of $B$-curves and the Frobenius splitting, we prove that the restriction map for global sections of an ample line bundle to a $B$-curve is surjective (Theorem \ref{thm: res for ample is surjective}).
Next, by Theorem \ref{thm: res for ample is surjective} and the spherical Nakai criterion (Theorem \ref{thm:spherical-Nakai-criterion}), we obtain the nefness of the twisted kernel bundles (Lemma \ref{lem:positivity-of-kernel-bundle}).

\subsection{Preliminaries} \label{subsection:pre}

Let us review well-known properties of spherical varieties.
Assume that $k$ is an algebraically closed field of arbitrary characteristic.

\begin{defn} \label{defn: spherical}
    Let $G$ be a connected reductive group, and $B$ a Borel subgroup.
    A normal $G$-variety $X$ is called \emph{spherical} if $B$ has an open dense orbit in $X$.
\end{defn}

\begin{thm}[{\cite[Theorem 2.2.12, Corollaries 3.3.5, 4.3.10]{Perrin-sanya}, \cite[Corollary 31.7]{Timashev}}] \label{thm:facts-on-spherical}
    Let $G$ be a connected reductive group, and $X$ a spherical $G$-variety.
    \begin{enumerate}
        \item For a Borel subgroup $B$ of $G$, $X$ contains only finitely many $B$-orbits.
        \item If $k$ is of characteristic $0$, then every $G$-invariant subvariety of $X$ is normal, and spherical with respect to the $G$-action.
        \item Assume that $k$ is of characteristic $0$ and $X$ is complete. For a line bundle $M$ on $X$, $M$ is nef if and only if $M$ is globally generated.
        If it is the case, then $H^{i}(X,\, M) = 0$ for all $i \ge 1$, and the restriction map $H^{0}(X,\, M) \rightarrow H^{0}(Y,\, M|_{Y})$ is surjective for any closed $G$-invariant subvariety $Y \subset X$.
    \end{enumerate}
\end{thm}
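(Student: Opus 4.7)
The plan is to establish the three parts in sequence, largely by reducing to classical structure results for spherical varieties.

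For (1), finiteness of $B$-orbits, I would argue by induction on $\dim X$. By definition, the open $B$-orbit in $X$ is a single orbit, so its complement is a proper closed $B$-stable subvariety whose irreducible components are themselves $B$-stable. The crucial input is Brion's local structure theorem (or the Luna--Vust slice theorem), which shows that each such component, equipped with an appropriate reductive action obtained from the stabilizer of a generic point, is again spherical. The induction hypothesis then yields finiteness of $B$-orbits on each component, and summing over components gives the claim on $X$.

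For (2), the sphericity statement is immediate from (1): any $G$-invariant subvariety $Y \subset X$ is $B$-invariant, hence contains only finitely many $B$-orbits; since $Y$ is irreducible, one of them is open and dense in $Y$, so $Y$ is spherical. The normality assertion is the deeper point. The strategy is to exploit that the ideal sheaf $\Ical_{Y} \subset \Ocal_{X}$ of a $G$-invariant subvariety carries a $G$-equivariant filtration with tightly controlled graded pieces, which forces $\Ocal_{Y}$ to be normal without passing to the normalization. In characteristic zero one may alternatively invoke an \'etale local slice model (Luna's slice theorem applied equivariantly), reducing normality to normality of spherical modules for Levi subgroups.

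For (3), suppose $p = 0$ and $X$ is complete. After replacing $M$ by a sufficiently divisible power to obtain a $G$-linearization, the base locus of $|M|$ is $G$-stable. I would prove the key step -- that a nef line bundle on a complete spherical variety has empty base locus -- by induction on the $G$-orbit structure: $M$ restricts to a nef bundle on every $G$-orbit closure $Z$, $Z$ is itself spherical by (2), and on a \emph{closed} orbit (a flag variety) nef implies globally generated by Borel--Weil. The cohomological vanishing $H^{i}(X,\, M) = 0$ for $i \ge 1$ then follows from a Kawamata--Viehweg-type argument, using that spherical varieties have rational singularities and that $-K_{X}$ has strong positivity properties on complete spherical varieties. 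Finally, for surjectivity of $H^{0}(X,\, M) \rightarrow H^{0}(Y,\, M|_{Y})$, the long exact sequence associated to $0 \to \Ical_{Y} \otimes M \to M \to M|_{Y} \to 0$ reduces the problem to showing $H^{1}(X,\, \Ical_{Y} \otimes M) = 0$. Using a $G$-equivariant filtration of $\Ical_{Y}$ whose graded pieces are (twisted) structure sheaves of $G$-orbit closures containing $Y$ (each themselves complete spherical by (2), with nef restriction of $M$), the vanishing propagates inductively.

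The main obstacle is the implication \emph{nef $\Rightarrow$ globally generated} in (3), which is a non-formal property of spherical varieties; it requires either the full local structure theorem or a careful induction on the combinatorial invariants (colors and $G$-invariant valuations), together with Borel--Weil on closed orbits. Once this and the vanishing are established, the restriction surjectivity is a formal consequence, but the entire package rests on the rich $G$-orbit stratification of spherical varieties, which has no analogue for general varieties.
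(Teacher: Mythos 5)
The paper does not prove this statement at all: it is quoted as known background, with the proofs delegated to Perrin's Sanya lectures and Timashev's book. So the relevant question is whether your sketch would actually reconstruct those proofs, and at the genuinely hard points it does not.

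For (1), the step you call ``the crucial input'' --- that each irreducible component of the complement of the open $B$-orbit becomes spherical for a suitable reductive action --- is essentially the whole theorem, and it does not follow from the local structure theorem applied naively: those components are $B$-stable but carry no natural reductive group action, and what Brion and Vinberg actually prove is that the complexity of \emph{every} irreducible $B$-stable subvariety of $X$ is bounded by that of $X$, via a degeneration argument. You are assuming the key lemma. For (2), a $G$-equivariant filtration of the ideal sheaf $\Ical_{Y}$ says nothing about normality of $Y$, which is a property of the local rings of $\Ocal_{Y}$ itself; the genuine argument reduces, via the local structure theorem, to the toric (multiplicity-free torus) case. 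For (3), your proof that the base locus is empty requires extending sections from a closed orbit to $X$, but that extension is exactly the restriction-surjectivity statement you prove afterwards in the same part, so as organized the argument is circular; and the proposed filtration of $\Ical_{Y}$ whose graded pieces are twisted structure sheaves of orbit closures does not exist in general. The Kawamata--Viehweg route to $H^{i}(X,\,M)=0$ is viable in spirit (complete spherical varieties are of Fano type), but the standard proofs of all the cohomological assertions in (3) go through either Brion's combinatorial description of $\Pic$ and of the nef cone by $B$-stable curves, or through Frobenius splitting and reduction mod $p$ --- the very technique the paper develops in Section \ref{subsection: F splitting} for its own Theorem \ref{thm: res for ample is surjective}.
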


We also recall the notion of parabolic induction, which is a useful tool to construct a spherical variety.

\begin{defn} \label{defn:parabolic-ind}
    Let $G$ be a connected reductive group, and $P$ a parabolic subgroup of $G$.
    Suppose that $G_{0}$ is a reductive group acting on a variety $Y$, and that there is a surjective homomorphism $P \twoheadrightarrow G_{0}$.
    Considering $Y$ as a $P$-variety, the geometric quotient
    \[
        G\times^{P}Y:= (G\times Y)/ P
    \]
    with respect to the $P$-action on $G \times Y$ given by $p \cdot (g,\,y) := (g\cdot p^{-1},\, p \cdot y)$ exists as a $G$-variety (see Remark \ref{rmk: associated fiber bundle}(1)), and we call $G\times^{P}Y$ the \emph{parabolic induction} from $Y$ via $P \twoheadrightarrow G_{0}$.
\end{defn}

Given a parabolic induction $G \times^{P} Y$, we denote by $[g,\,y]$ the point represented by a pair $(g,\,y) \in G \times Y$.

\begin{rmk} \label{rmk: associated fiber bundle}
    \begin{enumerate}
        \item The parabolic induction $G \times^{P} Y$ is indeed a $G$-variety, equipped with a $G$-equivariant morphism
        \[
            G \times^{P} Y \rightarrow G/P, \quad [g,\,y] \mapsto g \cdot P
        \]
        whose fiber at $e \cdot P$ is $P \times^{P}Y \simeq Y$.
        It follows from the fact that $G \rightarrow G/P$ is a Zariski locally trivial fiber bundle, see \cite[\S 2.3]{Brion18} for details. (In fact, parabolic inductions are particular cases of {\lq associated fibre bundles\rq} in \cite{Brion18}.)
        
        \item If $Y$ is a spherical $G_{0}$-variety, then the parabolic induction $G\times^{P} Y$ is a spherical $G$-variety; see for example \cite[\S20.6]{Timashev}.
    \end{enumerate}
\end{rmk}

In the rest of this section, we denote by $G$ a connected reductive group, and by $B$ a Borel subgroup.

\begin{thm}[{\cite{Hirschowitz1984}, \cite[Th\'eor\`eme 1.3.(i)]{Brion1993}, \cite{FMSS}}] \label{thm: reduction to B cycles}
    Assume that $k$ is of characteristic $0$.
    Let $X$ be a complete $B$-variety.
    Any effective cycle on $X$ is rationally equivalent to an effective $B$-invariant cycle.
\end{thm}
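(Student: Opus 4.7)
The plan is to apply Borel's fixed point theorem to the $B$-orbit closure of $Z$ inside a parameter space of effective cycles on $X$, and then to invoke the specialization principle for algebraic families of cycles to produce a rational equivalence between $Z$ and a $B$-invariant limit cycle.

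First I would reduce to the case where $X$ is projective. Since $\mathrm{char}(k) = 0$, $B$-equivariant Chow's lemma, combined with equivariant resolution of singularities, yields a projective $B$-variety $X'$ together with a $B$-equivariant birational surjection $\pi : X' \to X$; the proper pushforward $\pi_{*}$ preserves effectivity, $B$-invariance, and rational equivalence, so the theorem for $X'$ implies the theorem for $X$. Decomposing $Z = \sum m_{i} Z_{i}$ into prime components of a fixed dimension $d$ and fixing a projective embedding of $X$, I may further restrict attention to one degree class at a time.

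Next, consider the Chow variety $C_{d,e}(X)$ parametrizing effective $d$-cycles of degree $e = \deg Z$; this is a projective scheme carrying an induced algebraic $B$-action, and $Z$ corresponds to a point $[Z] \in C_{d,e}(X)$. Let $Y := \overline{B \cdot [Z]}$ be the Zariski closure of its $B$-orbit, which is a closed, complete, irreducible, $B$-invariant subvariety. Since $B$ is connected and solvable, Borel's fixed point theorem produces a $B$-fixed point $[Z'] \in Y$; and a $B$-fixed point of the Chow variety corresponds precisely to a $B$-invariant effective $d$-cycle on $X$, so $Z'$ is the desired effective $B$-invariant cycle.

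It remains to verify the rational equivalence $Z \sim Z'$. For this I would pick an irreducible curve $T \subset Y$ joining $[Z]$ and $[Z']$, take its normalization and compactification $\widetilde{T}$, and flatten the pullback of the universal family of cycles over $T$ to obtain a flat family of effective $d$-cycles on $X$ parametrized by the smooth projective curve $\widetilde{T}$, with $Z$ and $Z'$ occurring as fibers; since fibers of a flat family of cycles over a smooth connected base are rationally equivalent, this gives $Z \sim Z'$ in the Chow group $A_{*}(X)$. The hard part is precisely this last step: the Chow variety is only a set-theoretic parameter space rather than in itself a flat family, so one must carefully extract an honest algebraic family of cycles out of $Y$ and invoke flattening together with the specialization theorem for cycles. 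It is at this point that the hypothesis $p = 0$ enters most visibly, both in securing the equivariant resolutions used in the reduction and in ensuring the well-behaved properties of the Chow variety that make the specialization argument go through.
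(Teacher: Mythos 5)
The paper itself does not reprove this statement: it quotes it from Hirschowitz and Brion (projective case) and FMSS, and only supplies the reduction from complete to projective via the equivariant Chow lemma. Your reduction step is essentially that same reduction (equivariant resolution of singularities is not needed; Sumihiro's equivariant Chow lemma already gives a projective $B$-variety $X'$ with a proper birational $B$-morphism to $X$, and pushing forward strict transforms does the rest), and your main argument -- Chow variety plus Borel's fixed point theorem -- is the skeleton of the Hirschowitz/Brion proof. So the architecture is sound and matches the cited sources.

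However, there is a genuine gap exactly where you locate the ``hard part,'' and it is not merely a technical flattening issue: the principle you invoke is false as stated. Fibers of a flat (or Chow-theoretic) family of cycles over a smooth connected curve $\widetilde{T}$ are \emph{algebraically} equivalent, not rationally equivalent; rational equivalence is only guaranteed when the parameter curve is $\PP^{1}$ (or when the two points are joined by a chain of rational curves). A family of degree-one $0$-cycles on an elliptic curve parametrized by the curve itself already shows the difference. So picking an arbitrary irreducible curve $T \subset \overline{B\cdot[Z]}$ joining $[Z]$ to the fixed point $[Z']$ does not yield $Z \sim Z'$ in $A_{*}(X)$. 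The standard repairs are: (i) the FMSS argument, which avoids the Chow variety altogether -- filter $B$ by normal subgroups with successive quotients $\GG_{a}$ or $\GG_{m}$, and for each one-dimensional subquotient $H \subset \PP^{1}$ take the closure of $\{(t,\,t\cdot v)\} \subset \PP^{1}\times X$; this is flat over $\PP^{1}$, so the fiber over $1$ (namely $V$) is rationally equivalent to the fiber over $0$, which is $H$-invariant, and one iterates; or (ii) within your Chow-variety setup, replace the arbitrary curve $T$ by the closure of a one-parameter orbit $\{t\cdot[Z]\}$, which compactifies to a \emph{rational} curve through $[Z]$ and a limit point, again iterating along the filtration of $B$ to reach a $B$-fixed point. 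Either way, the rational curves must be produced explicitly; they do not come for free from connectedness of $\overline{B\cdot[Z]}$.
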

\begin{rmk}
    Theorem \ref{thm: reduction to B cycles} is proved in \cite{Hirschowitz1984} and \cite[Th\'eor\`eme 1.3.(i)]{Brion1993} in the case where $X$ is projective.
    The equivariant Chow lemma \cite[Theorem 2.1.3]{Perrin-sanya} implies the desired statement for $X$ complete.
    In fact, a more general version of the statement is given in \cite[Theorem 1]{FMSS}.
\end{rmk}

Now we recall a classification of $B$-curves (i.e., $B$-invariant curves) on spherical varieties, due to Brion \cite{Brion1993, Brion97}.
To this end, we use the notation of \cite[\S5]{Perrin-sanya}.

\begin{defnthm}[{\cite[\S1.1--1.3]{Brion97}, \cite[\S5]{Perrin-sanya}}] \label{thm:types-of-B-curves}
    Assume that $k$ is of characteristic $0$.
    Let $X$ be a complete spherical $G$-variety.
    If $C$ is an irreducible complete $B$-curve on $X$, then $C$ is isomorphic to $\PP^{1}$, and one of the following holds:
        \begin{enumerate}
            \item $B$ acts on $C$ via a nontrivial character. In this case, we say that $C$ is of \emph{type ($\chi$)};
            \item $C$ is contained in a closed $G$-orbit in $X$. In this case, we say that $C$ is of \emph{type (U)};
            \item There exists a closed surface $Y \subset X$ such that $P:= \text{Stab}_{G}(Y)$ is a parabolic subgroup containing $B$, $Y = P \cdot C$, and the natural morphism
            \[
                G \times^{P} Y \rightarrow X, \quad [g,\, y] \mapsto g \cdot y
            \]
            induces a birational morphism $G \times^{P}Y \rightarrow G \cdot C$.
            Furthermore, the natural $P$-action on $Y$ factors through a surjection $P \twoheadrightarrow PGL_{2}$ where the $PGL_{2}$-action on $Y$ is either
            \begin{enumerate}
                \item the diagonal action on $Y = \PP^{1} \times \PP^{1}$. In this case, we say that $C$ is of \emph{type (T)}; or
                \item the adjoint action on $Y = \PP(\mathfrak{sl}_{2}^{\vee})$. In this case, we say that $C$ is of \emph{type (N)}.
            \end{enumerate}
        \end{enumerate}
\end{defnthm}

The following are some subclasses of spherical varieties, which are used in the sequel.

\begin{defn} \label{defn: symm}
Assume that $k$ is not of characteristic $2$.
    \begin{enumerate}
        \item A closed reduced subgroup $H \subset G$ is called a \emph{symmetric subgroup} if there exists a nontrivial algebraic group involution $\theta : G \rightarrow G$ such that $(G^{\theta})^{0} \subset H \subset G^{\theta}$.
        Here, $G^{\theta}$ is the fixed point subgroup of $\theta$, and the superscript $0$ denotes the identity component.
    
    \item A normal $G$-variety $X$ is called a \emph{symmetric $G$-variety} if $X$ contains an open $G$-orbit isomorphic to $G/H$ for a symmetric subgroup $H \subset G$.
    \end{enumerate}
\end{defn}

\begin{rmk} \label{rmk: symm}
As in Definition \ref{defn: symm}, assume that $k$ is not of characteristic $2$.
    \begin{enumerate}
        \item Symmetric varieties are spherical; see \cite[Theorem 26.14]{Timashev}.
        \item Toric varieties are symmetric (take $\theta$ to be $x \mapsto x^{-1}$).
        \item The $PGL_{2}$-varieties $\PP^{1} \times \PP^{1}$ and $\PP(\mathfrak{sl}_{2}^{\vee})$ in Theorem \ref{thm:types-of-B-curves} are symmetric (take $\theta$ to be the conjugation by $\begin{pmatrix} -1 & 0 \\ 0 & 1 \end{pmatrix}$).
    \end{enumerate}
\end{rmk}

\begin{defn} \label{defn: horo}
    \begin{enumerate}
        \item A closed reduced subgroup $H \subset G$ is called a \emph{horospherical subgroup} if $H$ contains the unipotent radical of a Borel subgroup of $G$.
    
        \item A normal $G$-variety $X$ is called a \emph{horospherical $G$-variety} if $X$ contains an open orbit isomorphic to $G/H$ for a horospherical subgroup $H \subset G$.
    \end{enumerate}
\end{defn}

\begin{rmk} \label{rmk: horo}
    \begin{enumerate}
        \item Horospherical varieties are spherical by the Bruhat decomposition.
        \item Rational homogeneous spaces and toric varieties are horospherical.
        \item For a horospherical subgroup $H \subset G$, the normalizer $N_{G}(H)$ of $H$ is a parabolic subgroup.
        Moreover, the quotient $N_{G}(H)/H$ is isomorphic to a torus, and its dimension is called the \emph{rank} of $G/H$.
        See \cite[\S7.1--7.2]{Timashev} for details.
    \end{enumerate}
\end{rmk}

\begin{defn} \label{defn: toroidal}
    Let $X$ be a spherical $G$-variety.
    We say that $X$ is \emph{toroidal} if every $B$-invariant but not $G$-invariant prime divisor on $X$ does not contain a $G$-orbit.
\end{defn}

\begin{rmk} \label{rmk: toroidal}
    \begin{enumerate}
        \item Rational homogeneous spaces and toric varieties are toroidal.
    
    \item For details on toroidal varieties, we refer to \cite[\S29]{Timashev} and \cite{Perrin-sanya}.
    For example, when $k$ is of characteristic $0$, the following are well known:
    \begin{enumerate}
        \item Every homogeneous spherical variety admits an equivariant toroidal completion.
        Moreover, for any spherical $G$-variety $X$, there exists a toroidal spherical $G$-variety $\hat{X}$ equipped with a $G$-equivariant proper birational morphism $\hat{X} \rightarrow X$.
        These facts can be shown by Luna--Vust theory; see \cite[p. 174]{Timashev} for details.
        \item According to the local structure theorem (\cite[Theorem 29.1]{Timashev}, \cite[Theorem 3.4.1]{Perrin-sanya}), toroidal spherical varieties locally look like toric varieties.

        \item A toroidal horospherical $G$-variety $X$ can be written as the parabolic induction from a toric variety; see for example \cite[Examples 1.13(2)]{pasquier-thesis}.
        More precisely, if the open dense orbit in $X$ is isomorphic to $G/H$, then $X \simeq G \times^{N_{G}(H)} Y$ for a $N_{G}(H)/H$-toric variety $Y$ (see Remark \ref{rmk: horo}).
        Observe that $\dim Y$ is equal to the rank of $G/H$.
    \end{enumerate}
    
    \end{enumerate}
\end{rmk}

We close this section with a description of orbits of $B$-curves of type ($\chi$).

\begin{prop} \label{prop: orbit of chi curve}
    Assume that $k$ is of characteristic $0$.
    Let $X$ be a complete spherical $G$-variety, and $C \subset X$ a $B$-curve of type ($\chi$).
    For a point $x \in C$ that is not $B$-fixed and its stabilizer $H:= \text{Stab}_{G}(x)$, the following hold:
    \begin{enumerate}
        \item $H$ is a horospherical subgroup of $G$ such that $\dim N_{G}(H)/H = 1$.
        That is, $G/H$ is a horospherical variety of rank 1.

        \item For the toric variety $\PP^{1}$ under the $k^{\times}$-action, if we regard $\PP^{1}$ as an $N_{G}(H)$-variety via the quotient morphism $N_{G}(H) \rightarrow N_{G}(H)/H \simeq k^{\times}$, then there is a $G$-equivariant birational morphism $G \times^{N_{G}(H)} \PP^{1} \rightarrow G \cdot C$ such that $N_{G}(H) \times ^{N_{G}(H)}\PP^{1} (\simeq \PP^{1})$ is the strict transform of $C$.
    \end{enumerate}
\end{prop}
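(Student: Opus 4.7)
The plan is to establish (1) by first observing horosphericity of $H$ for general reasons, then pinning down the rank as exactly $1$ via a finiteness argument for $B$-orbits; (2) will then follow by constructing a $P$-equivariant curve inside $X$ and inducing from $P = N_G(H)$ up to $G$.

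For (1), since $B$ acts on $C$ through a character, and every character of $B$ kills the commutator $[B,B] = U = R_u(B)$, the unipotent radical $U$ acts trivially on $C$. In particular $U \subset H$, so $H$ is horospherical. By Remark \ref{rmk: horo}(3), $P := N_G(H)$ is a parabolic subgroup and $P/H$ is a torus of dimension $r := \dim N_G(H)/H$; moreover $P$ contains $B$ by the standard structure of horospherical subgroups (concretely: $T$ normalizes both $U$ and $T \cap H$, so $T \subset N_G(H)$, while $U \subset H \subset N_G(H)$ automatically, giving $B = TU \subset P$). To prove $r = 1$, I would analyze the $B$-action on the fiber $P/H$ of $G/H \to G/P$ over $eP$. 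Writing $b = tu \in B = TU$ and using $U \subset H$, one finds $B \cap H = (T \cap H) \cdot U$, and the hypothesis $\dim B \cdot x = 1$ forces $T \cap H$ to have codimension $1$ in $T$. Since $H$ is normal in $P$, the map $b \mapsto bH$ is a group homomorphism $B \to P/H$ whose image $BH/H$ is a $1$-dimensional subtorus; thus the $B$-orbits on $P/H$ are the cosets of this subtorus, parametrized by a quotient torus of dimension $r - 1$. If $r \ge 2$, this quotient is positive-dimensional and hence infinite, producing infinitely many $B$-orbits on $G \cdot x \simeq G/H$ and contradicting the finiteness of $B$-orbits on the spherical $G$-variety $\overline{G \cdot x}$ (Theorem \ref{thm:facts-on-spherical}(1),(2)). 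Hence $r = 1$.

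For (2), with $P/H \simeq \GG_m$ I would view $\PP^1$ as the standard $(P/H)$-toric variety, on which $P$ acts through the quotient $P \to P/H$. Define a $P$-equivariant morphism $\phi: \PP^1 \to X$ by sending the identity coset $eH \in P/H \subset \PP^1$ to $x$ and extending over the two torus-fixed points of $\PP^1$ by properness of $X$. The image $\overline{P \cdot x}$ is an irreducible curve containing $C = \overline{B \cdot x}$, hence equals $C$, and $\phi$ restricts to an isomorphism $\PP^1 \xrightarrow{\sim} C$. Inducing gives $f: G \times^P \PP^1 \to X$, $[g,z] \mapsto g \cdot \phi(z)$, whose image is $\overline{G \cdot x} = G \cdot C$. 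The equality of dimensions $\dim G/P + 1 = \dim G/H$ together with a standard argument using $\mathrm{Stab}_G(x) = H \subset P$ shows that $f$ is generically injective, hence birational, and the distinguished fiber $N_G(H) \times^{N_G(H)} \PP^1 \simeq \PP^1$ over $eP$ maps isomorphically onto $C$, so it is the strict transform of $C$.

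I expect the rank bound in (1) to be the main obstacle: it requires converting the condition $\dim B \cdot x = 1$ together with normality of $H$ in $P$ into an orbit count on the fiber $P/H$ via group-theoretic means, and then invoking sphericity of $\overline{G \cdot x}$ (Theorem \ref{thm:facts-on-spherical}(2)) to force finiteness of $B$-orbits and hence $r = 1$.
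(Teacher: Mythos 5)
Your argument is correct, and part (2) in particular takes a genuinely different route from the paper's. For (1) the core is the same---$U\subset H$ gives horosphericity---but you bound the rank differently: the paper observes that $H$ also contains $\ker(\chi|_T)$, which forces $\dim N_G(H)/H\le 1$ directly, and excludes rank $0$ by noting that a complete homogeneous space carries no type-$(\chi)$ curve; you instead extract $\mathrm{codim}_T(T\cap H)=1$ from $\dim B\cdot x=1$ (which already gives rank $\ge 1$, since $BH/H$ is then a one-dimensional subtorus of $P/H$) and rule out rank $\ge 2$ by counting $B$-orbits on the fiber $P/H$ and invoking finiteness of $B$-orbits on a spherical variety. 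Both are valid; one small caution is that your parenthetical justification of $B\subset N_G(H)$ (``$T$ normalizes $U$ and $T\cap H$'') is not by itself a proof, since $H$ need not equal $(T\cap H)U$---but this containment is exactly the standard structure of horospherical subgroups supplied by Remark \ref{rmk: horo}(3) and its references, so it is a presentational issue rather than a gap. For (2) the paper passes through a toroidal resolution $\hat{X}\rightarrow X$, identifies $\hat{X}$ as a parabolic induction $G\times^{N_G(H)}\hat{Y}$ with $\hat{Y}\simeq\PP^1$ via the classification of toroidal horospherical varieties, and pins down the strict transform of $C$ by excluding the other possibilities; you instead build the morphism by hand, extending the orbit map $P/H\rightarrow P\cdot x$ to $\phi:\PP^1\rightarrow C$ by completeness and inducing up to $f:G\times^P\PP^1\rightarrow G\cdot C$. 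Your route is more elementary and self-contained (no Luna--Vust theory), at the cost of having to verify birationality directly; the key point there, which you only gesture at as ``a standard argument,'' is that the two $\GG_m$-fixed points of $\PP^1$ map to $P$-fixed points of $X$ and hence cannot lie in the open orbit $G/H$, so that $f$ restricts over the open orbit to the isomorphism $G\times^P(P/H)\simeq G/H\rightarrow G\cdot x$. With that spelled out, your argument is complete.
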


\begin{proof}
    \begin{enumerate}
        \item Let $T$ be a maximal torus contained in $B$.
        Since $B$ acts on $C(\simeq\PP^{1})$ via a nontrivial character $\chi : B \rightarrow k^{\times}$, $H$ contains the unipotent radical $U \subset B$ and the kernel of $\chi|_{T} : T \rightarrow k^{\times}$.
        It follows that $H$ is a horospherical subgroup with $\dim N_{G}(H)/H \le 1$.
        If it is zero, then $H$ is a parabolic subgroup, which is a contradiction otherwise $G \cdot x$ is a rational homogeneous space containing $C$.
        Indeed, a rational homogeneous space does not contain a $B$-curve of type ($\chi$), since a rational homogeneous space contains a unique $B$-fixed point, while a $B$-curve of type ($\chi$) contains two $B$-fixed points.
        Therefore $\dim N_{G}(H)/H = 1$ and the statement follows.

        \item Since $C$ is $B$-invariant, $G \cdot C$ is a closed subvariety of $X$.
        By Theorem \ref{thm:facts-on-spherical}(2), we may assume that $X = G \cdot C$, that is, $X$ is a horospherical variety whose open $G$-orbit $G/H$ is of rank 1 and $C$ intersects the open $G$-orbit. Consider a toroidal horospherical $G$-variety $\hat{X}$ equipped with a $G$-equivariant proper birational morphism $\hat{X} \rightarrow X$ (Remark \ref{rmk: toroidal}(2.a)).
        Being a toroidal horospherical variety, $\hat{X}$ can be obtained as the parabolic induction $G \times^{N_{G}(H)} \hat{Y}$ for a $k^{\times}$-toric variety $\hat{Y}$ where the $N_{G}(H)$-action on $\hat{Y}$ is given via the quotient morphism $N_{G}(H) \rightarrow N_{G}(H)/H \simeq k^{\times}$ (Remark \ref{rmk: toroidal}(2.c)).
        Moreover, since the only 1-dimensional toric variety is $\PP^{1}$, we have $\hat{Y} \simeq\PP^{1}$.
        Denote by $\hat{C}$ the strict transform of $C$ in $\hat{X}$.
        Since $\hat{C}$ is a $B$-curve of type ($\chi$), and since a rational homogeneous space does not contain such a curve, $\hat{C}$ is contained in the fiber of $\hat{X} \rightarrow G/N_{G}(H)$ over a $B$-fixed point.
        As $G/N_{G}(H)$ contains a unique $B$-fixed point $e \cdot N_{G}(H)$ and $\hat{Y}$ is the fiber over $e \cdot N_{G}(H)$, we have $\hat{C} = \hat{Y}$. \qedhere
    \end{enumerate}
\end{proof}

\subsection{Characteristic \texorpdfstring{$p$}{p} method: Frobenius splitting} \label{subsection: F splitting}

Now we prove that for an ample line bundle on a spherical variety, the restriction map for its global sections to a $B$-curve is surjective.
To do this, we prove a cohomology vanishing result, using a positive characteristic method called Frobenius splitting, introduced by Mehta and Ramanathan \cite{MR85}.
Our main reference on the Frobenius splitting is \cite{BK05}.

\begin{defn}\label{defn:F-split}
    Assume that $k$ is of positive characteristic.
    Let $X$ be a variety over $k$, and $F:X \rightarrow X$ the Frobenius morphism, that is, $F$ is the identity on the underlying topological space, but the associated map $F^{\#}:\Ocal_{X} \rightarrow F_{*}\Ocal_{X}$ is given by the $p$th power map.
    \begin{enumerate}
        \item We say that $X$ is \emph{Frobenius split} if there is an $\Ocal_{X}$-linear map $\sigma: F_{*}\Ocal_{X} \rightarrow \Ocal_{X}$ such that $\sigma \circ F^{\#} = id_{\Ocal_{X}}$, i.e., $F^{\#}$ has a left inverse.
        In this case, we say that $\sigma$ is a \emph{splitting}.

        \item A closed subscheme $Y$ of $X$ is called \emph{compatibly split} in $X$ if there exists a splitting $\sigma : F_{*}\Ocal_{X} \rightarrow \Ocal_{X}$ such that $\sigma(F_{*} I_{Y/X}) = I_{Y/X}$ for the ideal sheaf $I_{Y/X}$ corresponding to $Y \subset X$.
    \end{enumerate}
\end{defn}

In fact, a closed subscheme that is compatibly split is necessarily reduced (\cite[Proposition 1.2.1]{BK05}).

In the rest of this section, we denote by $G$, $T$, and $B$ a connected reductive group, a maximal torus, and a Borel subgroup containing $T$, respectively.
For a $B$-variety $X$, there is a notion of {\lq almost $B$-invariant\rq} splitting, defined as follows:

\begin{defn}[{\cite[Definition 4.1.4, Lemma 4.1.6]{BK05}}]
    Assume that $k$ is of positive characteristic $p$.
    Let $X$ be a $B$-variety, and suppose that $\sigma : F_{*} \Ocal_{X} \rightarrow \Ocal_{X}$ is a splitting.
    We say that $\sigma$ is \emph{$B$-canonical} if the following two conditions are satisfied:
    \begin{enumerate}
        \item $\sigma$ is $T$-invariant; and
        \item Let $e$ be a root vector associated to a simple root and $m \in \ZZ_{\ge p}$.
        For the $m$-th divided power $e^{(m)} \coloneqq e^{m}/m!$, defined as an element of the \emph{distribution algebra} $\text{Dist}(B)$ of $B$ (cf. \cite[\S7.7]{Jantzen}), $\sigma$ is annihilated by $e^{(m)}$ under the natural action of $\text{Dist}(B)$ (cf. \cite[\S7.11]{Jantzen}) induced by the action of the Lie algebra of $B$ on $\text{Hom}_{\Ocal_{X}}(F_{*}\Ocal_{X},\, \Ocal_{X})$.
    \end{enumerate}
\end{defn}

It is not true that every spherical variety admits a $B$-canonical splitting.
In fact, not every spherical variety admits a Frobenius splitting.
However, by \cite[Corollary 3.4]{Perrin14} or by \cite[Theorem 2.2 and \S3]{Tange18}, a $B$-canonical splitting exists on a symmetric variety when $p > 2$.
Combined with \cite[Proposition 4.1.17, Lemma 1.1.8, and Exercise 4.1.E(3)]{BK05} and \cite[Lemma 1.3]{Tange12}, the existence of $B$-canonical splitting on a symmetric variety implies the following:

\begin{lemma} \label{lemma: B can splitting exists}
    Assume that the characteristic of $k$ is larger than $2$.
    Let $G_{0}$ be a connected reductive group and $Y$ a symmetric $G_{0}$-variety.
    If $P$ is a parabolic subgroup of $G$ containing $B$, then for the parabolic induction $X:= G\times^{P}Y$ induced from $Y$ via a surjection $P \twoheadrightarrow G_{0}$, $Y$ is compatibly split in $X$ by a $B$-canonical splitting.
    (Here, we identify $Y$ with $P \times^{P} Y$ via the natural isomorphism $P \times^{P} Y \simeq Y$, $[p,\, y] \mapsto p \cdot y$.)
\end{lemma}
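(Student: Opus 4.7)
The plan is to transport a canonical splitting of $Y$ up to the parabolic induction $X = G \times^{P} Y$ and then recognize $Y$ as a $B$-stable subscheme of $X$. First, fix a maximal torus $T \subseteq B$, and let $B_{0} \subseteq G_{0}$ denote the image of the Borel subgroup $B \cap P$ under the surjection $\pi: P \twoheadrightarrow G_{0}$; then $B_{0}$ is a Borel subgroup of $G_{0}$ with maximal torus $\pi(T)$. Since $p > 2$ and $Y$ is a symmetric $G_{0}$-variety, Corollary 3.4 of \cite{Perrin14} (equivalently Theorem 2.2 of \cite{Tange18}) furnishes a $B_{0}$-canonical splitting $\sigma_{0}: F_{*}\Ocal_{Y} \to \Ocal_{Y}$.

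Next, I would pull back $\sigma_{0}$ along $\pi$ to a $B$-canonical splitting of $Y$ as a $P$-variety, hence as a $B$-variety via $B \subseteq P$. The $T$-invariance is immediate from the $\pi(T)$-invariance of $\sigma_{0}$. For the divided-power condition, each simple root vector of $B$ either lies in the unipotent radical of $P$ (and so acts trivially on $Y$) or maps under $d\pi$ to a simple root vector of $B_{0}$, for which annihilation of the splitting follows from the $B_{0}$-canonical property of $\sigma_{0}$; this formal transfer is exactly the content of Exercise 4.1.E(3) of \cite{BK05}. Then Proposition 4.1.17 of \cite{BK05} extends this $B$-canonical splitting of the fiber to a $B$-canonical splitting $\sigma$ of the total space $X = G \times^{P} Y$.

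Finally, I would identify $Y$ with $P \times^{P} Y \subseteq G \times^{P} Y$, which is the fiber of the projection $X \to G/P$ over the $B$-fixed point $eP$ and is therefore $B$-stable. Invoking Lemma 1.1.8 of \cite{BK05} together with Lemma 1.3 of \cite{Tange12}, every $B$-stable closed subscheme is compatibly split by a $B$-canonical splitting, so $Y$ is compatibly split in $X$ by $\sigma$, as desired. The main obstacle is the second step: one must verify that the full $B$-canonical structure — not merely the splitting property — genuinely descends from $Y$ to the pulled-back map, and then that extension to $X$ via Proposition 4.1.17 preserves annihilation by divided powers of all simple root vectors of $B$, including those outside $P$ which act nontrivially through the $G$-translation on the base $G/P$.
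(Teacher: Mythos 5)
Your proposal follows the same route the paper itself takes (the paper does not write out a proof, but assembles exactly these citations): a canonical splitting of the symmetric variety $Y$, transfer of the canonical structure along $P \twoheadrightarrow G_{0}$ (using that the unipotent radical of $P$ lies in the kernel, so the simple root vectors outside the Levi act trivially), extension to $G \times^{P} Y$ via \cite[Proposition 4.1.17]{BK05}, and identification of $Y$ with the fibre over $e \cdot P$. The one point to repair is your closing justification: it is \emph{not} true that every $B$-stable closed subscheme is compatibly split by a $B$-canonical splitting. Compatibly split subschemes are necessarily reduced, and even $B$-stable subvarieties need not be compatibly split by a given canonical splitting; the general results in this direction (\cite[Proposition 4.1.10]{BK05}, \cite[Lemma 1.3]{Tange12}) only propagate compatibility outward, from a subvariety already known to be compatibly split to its $B$- or $G$-saturation. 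The compatible splitting of the fibre $P \times^{P} Y$ should instead be read off from the construction in \cite[Proposition 4.1.17 and Exercise 4.1.E(3)]{BK05}: the splitting of $G \times^{P} Y$ is assembled from the canonical splitting of $G/P$, which compatibly splits the Schubert point $e \cdot P$, and the resulting splitting compatibly splits the fibre over that point. With that substitution your argument coincides with the paper's.
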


Using this, we prove a cohomology vanishing result when $\mathrm{char}(k) \not= 2$.

\begin{lemma} \label{lemma: cohomology of parabolic induction}
    Assume that $k$ is of characteristic $p \not= 2$, that is, either $p = 0$ or $p > 2$.
    Let $X$ be a normal projective variety, and $Y$ a closed subvariety of $X$.
    Suppose that there exists a variety $\hat{X}$ equipped with a proper birational morphism $\pi : \hat{X} \rightarrow X$ satisfying the following conditions:
    \begin{enumerate}
        \item $\hat{X}$ is the parabolic induction $G\times^{P}\hat{Y}$ induced from a symmetric $G_{0}$-variety $\hat{Y}$ via a surjection $P \twoheadrightarrow G_{0}$ where $G_{0}$ is a connected reductive group and $P$ is a parabolic subgroup of $G$; and
        \item $Y = \pi(\hat{Y})$.
    \end{enumerate}
    For any ample line bundle $A$ on $X$ and the ideal sheaf $I_{Y/X}$ associated to $Y \subset X$, we have $H^{1}(X,\,A \otimes I_{Y/X}) = 0$.
    In particular, the restriction map $H^{0}(X,\, A) \rightarrow H^{0}(Y,\, A|_{Y})$ is surjective.
\end{lemma}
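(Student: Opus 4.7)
The plan is to prove the vanishing in characteristic $p > 2$ via Frobenius splitting on $\hat{X}$ and descent to $X$, and then to deduce the characteristic-zero case by spreading out over a finitely generated $\ZZ$-subalgebra and reducing modulo a large prime.

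\emph{Characteristic $p > 2$.} By Lemma~\ref{lemma: B can splitting exists}, there is a $B$-canonical Frobenius splitting $\sigma : F_{*}\Ocal_{\hat{X}} \to \Ocal_{\hat{X}}$ of $\hat{X}$ that compatibly splits $\hat{Y}$. Since $\pi$ is proper birational and $X$ is normal, Zariski's Main Theorem gives $\pi_{*}\Ocal_{\hat{X}} = \Ocal_{X}$; combined with the functoriality identity $\pi_{*} F_{\hat{X}*} = F_{X*}\pi_{*}$, this pushes $\sigma$ forward to a Frobenius splitting $\tau := \pi_{*}\sigma : F_{X*}\Ocal_{X} \to \Ocal_{X}$ of $X$. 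A direct check on open sets, using $Y = \pi(\hat{Y})$ together with the reducedness of $Y$ and $\hat{Y}$, yields $\pi_{*}I_{\hat{Y}/\hat{X}} = I_{Y/X}$; consequently $\tau$ compatibly splits $Y$ in $X$. Since $A$ is ample, the standard cohomology vanishing for compatibly Frobenius-split pairs (\cite[Theorem~1.2.8]{BK05}) gives $H^{i}(X,\, A \otimes I_{Y/X}) = 0$ for all $i > 0$.

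\emph{Characteristic $0$.} Spread the whole data $(G, P, G_{0}, \hat{X}, \hat{Y}, \pi, X, Y, A)$ out over a finitely generated $\ZZ$-subalgebra $R \subset k$, preserving the hypotheses of the lemma over a dense open of $\mathrm{Spec}(R)$. Pick a closed point $\mathfrak{m}$ of residue characteristic $> 2$; the fiber over $\mathfrak{m}$ falls under the previous step, so $H^{1}(X_{\mathfrak{m}},\, A_{\mathfrak{m}} \otimes I_{Y_{\mathfrak{m}}/X_{\mathfrak{m}}}) = 0$. Upper-semicontinuity of cohomology in flat families transports this vanishing to the generic fiber, hence to $k$ in characteristic $0$. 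The ``in particular'' assertion is then immediate from the long exact sequence associated to $0 \to A \otimes I_{Y/X} \to A \to A|_{Y} \to 0$.

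\emph{Main obstacle.} The technical crux is the descent step from $\hat{X}$ to $X$, notably the identification $\pi_{*}I_{\hat{Y}/\hat{X}} = I_{Y/X}$ and the correct matching of the compatibly-split structure. A secondary point is that the spread-out step requires the Frobenius-splitting construction underlying Lemma~\ref{lemma: B can splitting exists} to behave uniformly along the fibers, which should be achievable by choosing $R$ so that the parabolic induction $\hat{X}_{R} = G_{R} \times^{P_{R}} \hat{Y}_{R}$ and its symmetric structure are all defined integrally over $R$.
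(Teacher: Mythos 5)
Your proposal is correct and follows essentially the same route as the paper: in characteristic $p>2$ the paper also obtains the compatible splitting of $Y$ in $X$ by pushing forward the splitting of $(\hat{X},\hat{Y})$ from Lemma \ref{lemma: B can splitting exists} along $\pi$ (this is exactly \cite[Lemma 1.1.8]{BK05}, which packages your identification $\pi_{*}I_{\hat{Y}/\hat{X}}=I_{Y/X}$) and then applies \cite[Theorem 1.2.8]{BK05}, while the characteristic-zero case is handled by the same spreading-out and semicontinuity argument. The only difference is that the paper spends more care verifying that a general geometric fiber is again a parabolic induction from a symmetric variety, which is precisely the uniformity issue you flag as the secondary obstacle.
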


\begin{proof}
    Observe that the morphism $\pi^{\#} : \Ocal_{X} \rightarrow \pi_{*} \Ocal_{\hat{X}}$ is an isomorphism by Zariski's main theorem \cite[Corollary~III.11.4]{Hartshorne}.
    If $p$ is positive, then $Y$ is compatibly split in $X$ by Lemma \ref{lemma: B can splitting exists} and \cite[Lemma 1.1.8]{BK05}, and hence the statement follows from \cite[Theorem 1.2.8]{BK05}.
    Thus we may assume that $p = 0$.
    In this case, the statement follows from the semi-continuity of cohomologies and the technique of reduction to positive characteristic; see for example \cite[\S1.6]{BK05} and \cite[\S31.3]{Timashev}.
    For the sake of completeness, let us briefly explain the idea of the proof.

    First, we extend necessary data over a finitely generated subring $R$ of $k$.
    This is straightforward for $G_{0}$, $G$, $P$, and $G/P$, since $k$ is an algebraically closed field of characteristic $0$ and so every reductive group over $k$ is split.
    Indeed, by \cite[Corollaire~1.3, Expos\'e~XXV and Proposition~1.4, Expos\'e~XXVI]{SGAIII}, there exist reductive group schemes $\underline{G_{0}}$ and $\underline{G}$ and a parabolic subgroup scheme $\underline{P}$ of $\underline{G}$ over $\ZZ$ such that the base changes of the $\ZZ$-schemes $\underline{G_{0}}$, $\underline{G}$, $\underline{P}$, and $\underline{G}/\underline{P}$ via $\text{Spec}(k) \rightarrow \text{Spec}(\ZZ)$ are $G_{0}$, $G$, $P$, and $G/P$, respectively.
    Similarly, the base change of the canonical morphism $\underline{G} \rightarrow \underline{G}/\underline{P}$ over $\ZZ$ recovers the natural projection $G \rightarrow G/P$.
    
    Other objects may not be defined over $\ZZ$.
    Nonetheless, since a variety can be covered by finitely many affine open subsets together with gluing functions, it is defined over a finitely generated subring $R$ of $k$, that is, it can be obtained as a base change via $\text{Spec}(k) \rightarrow \text{Spec}(R)$ from an $R$-scheme.
    Identifying a morphism between varieties with its graph, which is a variety, we see that finitely many varieties and morphisms between them can be defined over some finitely generated subring $R \subset k$.
    Moreover, over the affine open subsets, an ideal sheaf is nothing but an ideal, which is finitely generated, and a line bundle is a free module of rank 1.
    Thus by adding more finitely many generators to $R$ if necessary, we may assume that finitely many ideal sheaves and line bundles are also defined over $R$.
    We apply this process to find a finitely generated subring $R \subset k$ such that the following are defined over $R$:
    \begin{itemize}
        \item The varieties $X$ and $\hat{X}$, and the morphisms $\pi : \hat{X} \rightarrow X$, $\hat{X} \rightarrow G/P$, and $P \rightarrow G_{0}$.
        \item The ideal sheaves $I_{Y/X}$ and $I_{\hat{Y}/\hat{X}}$ (and hence the subvarieties $Y$ and $\hat{Y}$), and the line bundle $A$.
        \item The $G$-action on $\hat{X}$, and the $G_{0}$-action on $\hat{Y}$, by identifying them with the morphisms $G \times \hat{X} \rightarrow \hat{X}$ and $G_{0} \times \hat{Y} \rightarrow \hat{Y}$.
    \end{itemize}
    Their extensions over $R$ are denoted by the same symbols with underlined.
    For example, the base change of the morphism $\underline{\pi} : \underline{\hat{X}} \rightarrow \underline{X}$ via $\text{Spec}(k) \rightarrow \text{Spec}(R)$ is $\pi : \hat{X} \rightarrow X$.
    We denote by $r \in \text{Spec}(R)$ a closed point, and by $\overline{r}$ the associated geometric point $\text{Spec}(\overline{\kappa(r)})$ where $\overline{\kappa(r)}$ is the algebraic closure of the residue field $\kappa(r)$ of $r$.
    The fibers over $r$ and $\overline{r}$ are denoted by the same symbols without underlined.
    For example, a geometric fiber of $\underline{\pi} : \underline{\hat{X}} \rightarrow \underline{X}$ is denoted by $\pi_{\overline{r}} : \hat{X}_{\overline{r}} \rightarrow X_{\overline{r}}$.
    At this point, we do not claim that the extensions over $R$ and their geometric fibers have the exactly same properties with the original objects over $k$.
    Namely, it is not clear whether $X_{\overline{r}}$ is normal and projective, $A_{\overline{r}}$ is ample, $\pi_{\overline{r}}$ is proper and birational, $\hat{Y}_{\overline{r}}$ is $(G_{0})_{\overline{r}}$-symmetric, and $\hat{X}_{\overline{r}}$ is the parabolic induction $G_{\overline{r}} \times^{P_{\overline{r}}} \hat{Y}_{\overline{r}}$.
    These properties will be obtained in the next two paragraphs, again by adding finitely many more generators to $R$, or equivalently, by taking localizations of $R$ in $k$.
    
    The point is that for a general closed point $r \in \text{Spec}(R)$, the geometric fibers over $\overline{r}$ inherit many properties from the given objects defined over $k$.
    For example, by \cite[Th\'eor\`eme (8.10.5), p. 37--38]{EGAIV3}, we may assume that $\underline{X}$ is a projective $R$-scheme by replacing $\text{Spec}(R)$ by its affine open subset.
    Since geometric normality and ampleness are Zariski open conditions in a proper and flat family by \cite[Th\'eor\`eme~(12.2.4), p.~183 and Corollaire~(9.6.4), p.~75]{EGAIV3}, we may assume that the geometric fiber $X_{\overline{r}}$ of $\underline{X}$ is normal and that $\underline{A}$ is ample over $R$.
    We can further assume that $\underline{\pi}$ is a proper birational morphism over $R$, and that $\underline{P} \rightarrow \underline{G_{0}}$ is surjective, again by \cite[Th\'eor\`eme (8.10.5), p. 37--38]{EGAIV3}.

    On the other hand, the residue fields of closed points of $\text{Spec}(R)$ are of positive characteristic, and their prime fields are the images of the subring $\ZZ \subset R$.
    Thus by adding $\frac{1}{2}$ to the generating set of $R$, we may assume that the residue fields are not of characteristic $2$.
    Considering an affine open subset of $\text{Spec}(R)$ where the involution $\theta: G_{0} \rightarrow G_{0}$ extends, we may assume that the geometric fiber $\hat{Y}_{\overline{r}}$ of $\underline{\hat{Y}}$ is a symmetric $(G_{0})_{\overline{r}}$-variety.
    Since $\hat{Y}_{\overline{r}}$ is the fiber of the morphism $\hat{X}_{\overline{r}} \rightarrow G_{\overline{r}} / P_{\overline{r}}$ at $e \cdot P_{\overline{r}}$, we have a $G_{\overline{r}}$-equivariant bijective morphism $G_{\overline{r}} \times^{P_{\overline{r}}} \hat{Y}_{\overline{r}} \rightarrow \hat{X}_{\overline{r}}$, $[g,\,y]\mapsto g\cdot y$.
    Since $\hat{X} = G\times^{P}\hat{Y}$, we can assume that $G_{\overline{r}} \times^{P_{\overline{r}}} \hat{Y}_{\overline{r}}$ and $\hat{X}_{\overline{r}}$ contain open dense $G$-orbits isomorphic to each other (more precisely, isomorphic to $G_{\overline{r}}/H_{\overline{r}}$ where an open dense orbit in $\hat{X}$ is isomorphic to $G/H$), and then the morphism $G_{\overline{r}} \times^{P_{\overline{r}}} \hat{Y}_{\overline{r}} \rightarrow \hat{X}_{\overline{r}}$ is birational.
    Being a bijective birational morphism between normal complete varieties, it is an isomorphism by Grothendieck's version of Zariski's main theorem \cite[Corollaire~(8.12.10), p.~48]{EGAIV3}.
    That is, $\hat{X}_{\overline{r}}$ is the parabolic induction $G_{\overline{r}} \times^{P_{\overline{r}}} \hat{Y}_{\overline{r}}$.
    Since $\pi_{\overline{r}} : \hat{X}_{\overline{r}} \rightarrow X_{\overline{r}}$ sends $\hat{Y}_{\overline{r}}$ onto $Y_{\overline{r}}$, by the case of positive characteristic, we have $H^{1}(X_{\overline{r}},\, A_{\overline{r}} \otimes I_{Y_{\overline{r}}/X_{\overline{r}}}) = 0$.
    Since the restriction of $\underline{A} \otimes_{R} I_{\underline{Y} / \underline{X}}$ on the geometric fiber $X_{\overline{r}}$ is $A_{\overline{r}} \otimes I_{Y_{r}/X_{\overline{r}}}$,
    we see that $H^{1}(X_{r},\, (\underline{A} \otimes_{R} I_{\underline{Y} / \underline{X}})_{r}) = 0$ by the cohomology base change theorem \cite[Proposition~III.9.3]{Hartshorne} applied to the base change $\text{Spec} (\overline{\kappa(r)}) \rightarrow \text{Spec}(\kappa(r))$.
    Thus by the semi-continuity \cite[Theorem~III.12.8]{Hartshorne}, we have $H^{1}(X_{\eta},\, (\underline{A} \otimes_{R} I_{\underline{Y} / \underline{X}})_{\eta}) = 0$ for the generic point $\eta \in \text{Spec}(R)$.
    Finally, again by the cohomology base change theorem \cite[Proposition~III.9.3]{Hartshorne} applied to the base change $\text{Spec}(k) \rightarrow \text{Spec}(\kappa(\eta))$, we conclude that $H^{1}(X,\, A \otimes I_{Y/X}) = 0$.
\end{proof}

Combining Lemma~\ref{lemma: cohomology of parabolic induction} with the results on $B$-curves given in Section~\ref{subsection:pre}, we prove Theorem~\ref{thm: res for ample is surjective}:

\begin{proof}[Proof of Theorem~\ref{thm: res for ample is surjective}]
    Since $C$ is $B$-invariant, $G \cdot C$ is a closed subvariety of $X$.
    By Theorem \ref{thm:facts-on-spherical}(2)--(3), $G \cdot C$ is a (projective) spherical $G$-variety, and the restriction map $H^{0}(X,\, A) \rightarrow H^{0}(G \cdot C,\, A|_{G \cdot C})$ is surjective.
    Thus by replacing $(X,\, A)$ by $(G \cdot C,\, A|_{G \cdot C})$, we may assume that $C$ intersects the open $G$-orbit in $X$.
    By Theorem \ref{thm:types-of-B-curves}, there are three possible cases:
    \begin{itemize}
        \item $C$ is of type (U).
        In this case, since $X$ is a rational homogeneous space and $C$ is a Schubert curve, the restriction map $H^{0}(X,\, A) \rightarrow H^{0}(C,\, A|_{C})$ is surjective by \cite[Theorem 3]{MR85}.

        \item $C$ is of type ($\chi$).
        In this case, the statement follows from Proposition \ref{prop: orbit of chi curve}(2) and Lemma \ref{lemma: cohomology of parabolic induction}.
        
        \item $C$ is of type (T) or (N).
        In this case, by Theorem \ref{thm:types-of-B-curves}(3), there exists a closed subvariety $Y \subset X$ satisfying the following conditions: $Y$ is a symmetric $PGL_{2}$-surface (Remark \ref{rmk: symm}(3)), $P := \text{Stab}_{G}(Y)$ is a parabolic subgroup acting on $Y$ via a surjection $P \twoheadrightarrow PGL_{2}$, $Y = P \cdot C$, and the morphism $G \times^{P} Y \rightarrow X$, $[g,\,y]\mapsto g \cdot y$ is birational.
        The restriction map $H^{0}(X,\, A) \rightarrow H^{0}(Y,\, A|_{Y})$ is surjective by Lemma \ref{lemma: cohomology of parabolic induction}, and thus it suffices to show that the restriction map $H^{0}(Y,\, A|_{Y}) \rightarrow H^{0}(C,\, A|_{C})$ is surjective.
        To see this, observe that the surjection $P \twoheadrightarrow PGL_{2}$ sends $B$ to a Borel subgroup $B_{0} \subset PGL_{2}$, and so $C$ is a $B_{0}$-invariant but not $PGL_{2}$-invariant curve on $Y$.
        Using the explicit description of $Y$ given in Theorem \ref{thm:types-of-B-curves}(3), such a curve can be easily classified.
        For this purpose, we may assume that $B_{0}$ consists of upper triangular matrices.
        \begin{itemize}
            \item If $C$ is of type (T), then $Y \simeq \PP^{1} \times \PP^{1}$ and $PGL_{2}$ acts diagonally.
            The only irreducible complete $B_{0}$-invariant but not $PGL_{2}$-invariant curves on $Y$ are $\PP^{1} \times \infty$ and $\infty \times \PP^{1}$ for a (unique) $B_{0}$-fixed point $\infty \in \PP^{1}$.
            Thus $H^{0}(Y,\, A|_{Y}) \rightarrow H^{0}(C,\, A|_{C})$ is surjective by the K\"unneth formula.

            \item If $C$ is of type (N), then $Y \simeq \PP(\mathfrak{sl}_{2}^{\vee})$ and $PGL_{2}$ acts via the adjoint representation.
            That is, if we identify $\mathfrak{sl}_{2}$ with the set of traceless $2 \times 2$ matrices, $PGL_{2}$ acts via the conjugation.
            Under this identification, there are exactly four $B_{0}$-orbits in $Y$:
            \begin{align*}
                \Ocal_{1} &\coloneqq \left\{ [z] \in \PP(\mathfrak{sl}_{2}^{\vee}) : z = \begin{pmatrix} a & b \\ c & -a \end{pmatrix},\, a,\, b \in k, \, c\in k^{\times},\, \det(z) \not=0 \right\}, \\
                \Ocal_{2} &\coloneqq \left\{ [z] \in \PP(\mathfrak{sl}_{2}^{\vee}) : z = \begin{pmatrix} a & b \\ 0 & -a \end{pmatrix},\, a,\, b \in k,\, \det(z)\not=0 \right\}, \\
                \Ocal_{3} &\coloneqq \left\{ [z] \in \PP(\mathfrak{sl}_{2}^{\vee}) : z = \begin{pmatrix} a & b \\ c & -a \end{pmatrix},\, a,\, b \in k,\, c\in k^{\times},\, \det(z) = 0 \right\}, \\
                \Ocal_{4} &\coloneqq \left\{ \left[\begin{pmatrix} 0 & 1 \\ 0 & 0 \end{pmatrix} \right] \right\}.
            \end{align*}
            Observe that $\Ocal_{1}$ is open in $\PP(\mathfrak{sl}_{2}^{\vee})$.
            In particular, there are only two irreducible complete $B_{0}$-curves in $Y$, namely $\Ocal_{2} \cup \Ocal_{4}$ and $\Ocal_{3} \cup \Ocal_{4}$.
            Since $\Ocal_{3} \cup \Ocal_{4}$, the vanishing locus of $\det(z)$, is $PGL_{2}$-invariant but $C$ is not $PGL_{2}$-invariant, we have $C = \Ocal_{2} \cup \Ocal_{4}$, i.e.,
            \[
                C = \left\{ [z] \in \PP(\mathfrak{sl}_{2}^{\vee}) : 0\not= z = \begin{pmatrix} a & b \\ 0 & -a \end{pmatrix},\, a,\, b \in k \right\}.
            \]
            Therefore under the isomorphism $\PP(\mathfrak{sl}_{2}^{\vee}) \simeq \PP^{2}$ induced by
            \[
                \mathfrak{sl}_{2} \simeq \CC^{3}, \quad \begin{pmatrix} a & b \\ c & -a \end{pmatrix} \mapsto (a,\,b,\,c),
            \]
            $C$ is identified with a line on $\PP^{2}$, and hence $H^{0}(Y,\, A|_{Y}) \rightarrow H^{0}(C,\, A|_{C})$ is surjective. \qedhere
        \end{itemize}
    \end{itemize}
\end{proof}

\subsection{Positivity of twisted kernel bundles} \label{subsection: Nakai}
From now on, we always assume that the characteristic of the base field $k$ is zero.
In this section, we investigate positivity of the twisted kernel bundles associated to ample line bundles on spherical varieties.

\begin{defn} \label{defn:kernel}
    Let $L$ be a globally generated vector bundle on a complete variety $X$.
    The \textit{kernel bundle} $M_L$ is defined to be the kernel of the evaluation map $\text{ev}: H^{0}(X,\, L) \otimes \Ocal_{X} \rightarrow L$.
    That is, $M_{L}$ is a vector bundle on $X$ that fits into a short exact sequence
    \[
        \begin{tikzcd}
        0 \arrow{r} & M_L \arrow{r} & H^0(X,\,L) \otimes \Ocal_X \arrow[r, "\text{ev}"] & L \arrow{r} & 0.
    \end{tikzcd}
    \]
\end{defn}

As before, we denote by $G$ a connected reductive group, and by $B$ a Borel subgroup.

Recall that nef line bundles on a complete spherical variety are globally generated (Theorem \ref{thm:facts-on-spherical}(3)).
In particular, for an ample line bundle on a spherical variety, its kernel bundle is well defined.
Now the main result of this section can be stated as follows:

\begin{lemma} \label{lem:positivity-of-kernel-bundle}
    Let $X$ be a projective spherical $G$-variety.
    If $L$ and $A$ are ample line bundles on $X$, then the twisted kernel bundle $M_L \otimes A$ is nef on $X$.
\end{lemma}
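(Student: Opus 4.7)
The plan is to combine the spherical Nakai criterion for $B$-equivariant vector bundles (Theorem \ref{thm:spherical-Nakai-criterion}) with the surjectivity of restriction maps (Theorem \ref{thm: res for ample is surjective}) to reduce nefness of $M_L \otimes A$ to a calculation on $\PP^1$. Since every line bundle on a normal spherical variety admits a $B$-linearization, $L$ may be equipped with one; then $H^0(X,L)$ becomes a $B$-representation, the evaluation map is $B$-equivariant, and $M_L \otimes A$ inherits the structure of a $B$-equivariant vector bundle on $X$. By Theorem \ref{thm:spherical-Nakai-criterion}, it suffices to verify that $(M_L \otimes A)|_C$ is nef for every irreducible complete $B$-curve $C \subset X$.

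Fix such a $B$-curve $C$, which is isomorphic to $\PP^1$ by Theorem \ref{thm:types-of-B-curves}. Restrict the defining short exact sequence of $M_L$ to $C$: since $L$ is locally free, the restriction remains exact, yielding
\[
0 \to M_L|_C \to H^0(X,L) \otimes \Ocal_C \to L|_C \to 0.
\]
Because $L$ is ample, Theorem \ref{thm: res for ample is surjective} gives that the restriction map $H^0(X,L) \to H^0(C,L|_C)$ is surjective, with kernel $V$ of dimension $r := \dim H^0(X,L) - \dim H^0(C,L|_C)$. Comparing the displayed sequence to the kernel bundle sequence $0 \to M_{L|_C} \to H^0(C,L|_C) \otimes \Ocal_C \to L|_C \to 0$ of $L|_C$ on $C$ via the snake lemma produces a short exact sequence
\[
0 \to V \otimes \Ocal_C \to M_L|_C \to M_{L|_C} \to 0.
\]

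On $C \simeq \PP^1$, the line bundle $L|_C$ has degree $d := \deg(L|_C) \ge 1$, and a direct calculation (for instance, via the pullback of the Euler sequence along the $d$-uple embedding $\PP^1 \hookrightarrow \PP^d$) gives $M_{L|_C} \simeq \Ocal_{\PP^1}(-1)^{\oplus d}$. Since
\[
\mathrm{Ext}^1\!\left(\Ocal_{\PP^1}(-1)^{\oplus d},\, \Ocal_{\PP^1}^{\oplus r}\right) \simeq H^1(\PP^1,\, \Ocal_{\PP^1}(1))^{\oplus rd} = 0,
\]
the extension splits, so $M_L|_C \simeq \Ocal_{\PP^1}^{\oplus r} \oplus \Ocal_{\PP^1}(-1)^{\oplus d}$. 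Writing $a := \deg(A|_C) \ge 1$ (by ampleness of $A$), we conclude
\[
(M_L \otimes A)|_C \simeq \Ocal_{\PP^1}(a)^{\oplus r} \oplus \Ocal_{\PP^1}(a-1)^{\oplus d},
\]
and every summand has nonnegative degree; hence $(M_L \otimes A)|_C$ is nef. Applying Theorem \ref{thm:spherical-Nakai-criterion} to this verification over all $B$-curves gives the claim.

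The main obstacle in this argument is not the $\PP^1$-calculation itself (which is essentially automatic once the restricted sequence is in hand) but rather ensuring that the two deep inputs are available: the surjectivity of the restriction $H^0(X,L) \to H^0(C, L|_C)$ for an arbitrary $B$-curve $C$, which rests on Brion's classification of $B$-curves together with the Frobenius-splitting argument of Section \ref{subsection: F splitting}, and the spherical Nakai criterion for $B$-equivariant vector bundles, which must be developed (Theorem \ref{thm:spherical-Nakai-criterion}) so that nefness can be tested on $B$-curves rather than on all curves. Both inputs are precisely what the preceding sections are designed to provide, making the present lemma essentially a formal consequence.
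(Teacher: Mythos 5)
Your proposal is correct and follows essentially the same route as the paper: equip the bundles with a $B$-linearization, reduce to $B$-curves via Theorem \ref{thm:spherical-Nakai-criterion}, use Theorem \ref{thm: res for ample is surjective} and the snake lemma to exhibit $M_L|_C$ as an extension of $M_{L|_C}\simeq\Ocal_{\PP^1}(-1)^{\oplus d}$ by a trivial bundle, and conclude that the extension splits so that twisting by $A$ gives nefness. The only cosmetic differences are that you identify $M_{L|_C}$ via the Euler sequence rather than by the vanishing $h^i(C,M_{L|_C})=0$, and you spell out the $\mathrm{Ext}^1$ computation for the splitting.
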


To prove Lemma \ref{lem:positivity-of-kernel-bundle}, we generalize a toric Nakai criterion \cite[Theorem 2.1]{HMP2010} to the setting of spherical varieties.

\begin{proof}[Proof of Theorem~\ref{thm:spherical-Nakai-criterion}]
    \begin{enumerate}
        \item Assume that $E$ is nef on $B$-curves.
        In other words, for every $B$-curve $C' \subset X$, $\Ocal_{\PP(E|_{C'})}(1) \simeq \Ocal_{\PP(E)}(1)|_{\PP(E|_{C'})}$ is nef.
        Since $E$ is $B$-equivariant, $\PP(E)$ admits a $B$-action such that the projection $\pi: \PP(E) \rightarrow X$ is $B$-equivariant, and so $\Ocal_{\PP(E)}(1)$ is nef on $B$-curves on $\PP(E)$.
        By Theorem~\ref{thm: reduction to B cycles}, for any curve $C \subset \PP(E)$, the $1$-cycle $[C]$ is rationally equivalent to $\sum_{\nu=1}^{\ell} a_{\nu} [C_{\nu}]$ for some $a_{\nu} \in \mathbb{Z}_{>0}$ and $B$-curves $C_{\nu} \subset \PP(E)$.
        It follows that $\Ocal_{\PP(E)}(1)$ is nef on $C$, and hence on $\PP(E)$.

        \item The second statement follows from the first statement, finiteness of the number of $B$-curves (Theorem \ref{thm:facts-on-spherical}(1)) and \cite[Remark 2.2]{HMP2010}.
    \end{enumerate}
\end{proof}

\begin{proof}[Proof of Lemma~\ref{lem:positivity-of-kernel-bundle}]
By \cite[Corollary C.5]{Timashev}, we may assume that $L$ and $A$ are $B$-equivariant so that $\text{ev}: H^{0}(X,\, L) \otimes \Ocal_{X} \rightarrow L$ is $B$-equivariant (here we use the natural $B$-action on $\Ocal_{X}$ and the induced $B$-representation structure on $H^{0}(X,\,L)$), and hence the twisted kernel bundle $M_L \otimes A$ is $B$-equivariant.
By Theorem \ref{thm:spherical-Nakai-criterion}(1), it suffices to show that the restriction of $M_{L} \otimes A$ on a $B$-curve $C \subset X$ is nef.
To do this, consider the following commutative diagram:
\begin{equation} \label{diagram:kernel bundle on B curves}
\begin{tikzcd}
    & 0 \arrow{d} & 0 \arrow{d} &  & \\
    & H^0(X,\, L \otimes I_{C/X}) \otimes \Ocal_C \arrow[r, equal] \arrow{d} & H^0(X,\, L \otimes I_{C/X}) \otimes \Ocal_C \arrow{d} & & \\
    0 \arrow{r} & M_{L}|_C \arrow{r} \arrow{d} & H^0(X,\, L) \otimes \Ocal_C \arrow{r} \arrow{d} & L|_C \arrow{r} \arrow[d, equal] & 0 \\
    0 \arrow{r} & M_{L|_C} \arrow{r} & H^0(C,\, L|_C) \otimes \Ocal_C \arrow{r} & L|_C \arrow{r} & 0.
\end{tikzcd}
\end{equation}
Here, $I_{C/X}$ is the ideal sheaf of $C$ in $X$.
Since $C \simeq \PP^{1}$, $h^{i}(C,\, M_{L|_{C}}) = 0$ for all $i \ge 0$ and hence $M_{L|_C}$ is isomorphic to $\Ocal_{\PP^1}(-1)^{\oplus e}$ for $e := h^0(C,\,L|_C)-1$.
By Theorem~\ref{thm: res for ample is surjective}, the restriction map $H^0(X,\, L) \rightarrow H^0(C,\, L|_C)$ is surjective, and thus by the snake lemma, $M_{L}|_{C} \rightarrow M_{L|_{C}}$ is surjective.
In other words, the first column of the diagram (\ref{diagram:kernel bundle on B curves}) is a short exact sequence.
Since $M_{L|_{C}} \simeq \Ocal_{\PP^1}(-1)^{\oplus e}$, the first column is split, i.e., $M_{L}|_{C}$ is a direct sum of copies of $\Ocal_{\PP^1}$ and $\Ocal_{\PP^1}(-1)$.
Since $A$ is ample, $M_L \otimes A$ is nef on $C$, as desired.
\end{proof}

Let us record a corollary of the proof of Lemma \ref{lem:positivity-of-kernel-bundle}, which is analogous to \cite[Corollary~4.8]{HMP2010}:

\begin{coro}
    Let $X$ be a projective spherical $G$-variety.
    If $L$ is an ample line bundle and $C \subset X$ an irreducible complete $B$-curve, then $M_L|_C \simeq \Ocal_{\PP^1}^{\oplus d} \oplus \Ocal_{\PP^1}(-1)^{\oplus e}$ for $d := h^0(X,\,L \otimes I_{C/X})$ and $e := h^0(C,\,L|_C)-1$.
\end{coro}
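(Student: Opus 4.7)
The plan is to repackage the diagram (\ref{diagram:kernel bundle on B curves}) and the snake-lemma argument already carried out inside the proof of Lemma~\ref{lem:positivity-of-kernel-bundle}, this time keeping track of the ranks of the two summand types explicitly rather than just concluding nefness after twisting by $A$.

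First, I would observe that since $C \simeq \PP^{1}$ by Theorem~\ref{thm:types-of-B-curves}, and since $L|_{C}$ is an ample (hence globally generated) line bundle on $\PP^{1}$, its kernel bundle is exactly $M_{L|_{C}} \simeq \Ocal_{\PP^{1}}(-1)^{\oplus e}$ for $e := h^{0}(C,\,L|_{C}) - 1$; this is a standard consequence of the Euler-type resolution of a line bundle on $\PP^{1}$ in terms of its global sections. Second, I would invoke the surjectivity $H^{0}(X,\,L) \twoheadrightarrow H^{0}(C,\,L|_{C})$ from Theorem~\ref{thm: res for ample is surjective}, which combined with the snake lemma applied to diagram (\ref{diagram:kernel bundle on B curves}) yields the short exact sequence
\[
    0 \longrightarrow \Ocal_{\PP^{1}}^{\oplus d} \longrightarrow M_{L}|_{C} \longrightarrow \Ocal_{\PP^{1}}(-1)^{\oplus e} \longrightarrow 0,
\]
where $d := h^{0}(X,\,L \otimes I_{C/X})$ is the rank of the constant-sheaf kernel and the outer terms are determined by the first step.

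Finally, since $\mathrm{Ext}^{1}(\Ocal_{\PP^{1}}(-1),\,\Ocal_{\PP^{1}}) \simeq H^{1}(\PP^{1},\,\Ocal_{\PP^{1}}(1)) = 0$, the displayed sequence automatically splits, giving the desired description $M_{L}|_{C} \simeq \Ocal_{\PP^{1}}^{\oplus d} \oplus \Ocal_{\PP^{1}}(-1)^{\oplus e}$.

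I do not expect any serious obstacle, because all the heavy lifting (namely, Theorem~\ref{thm: res for ample is surjective} and the construction of the diagram) is already in place; the only additional ingredients are the routine identification of $M_{L|_{C}}$ on $\PP^{1}$ and the vanishing of $\mathrm{Ext}^{1}$ between the relevant line bundles on $\PP^{1}$. If anything deserves a second look it is merely bookkeeping of the two integers $d$ and $e$, which are forced by the ranks and degrees already computed in the proof of Lemma~\ref{lem:positivity-of-kernel-bundle}.
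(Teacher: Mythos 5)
Your argument is correct and coincides with the paper's: the corollary is recorded precisely as a byproduct of the proof of Lemma~\ref{lem:positivity-of-kernel-bundle}, which identifies $M_{L|_C}\simeq\Ocal_{\PP^1}(-1)^{\oplus e}$, uses Theorem~\ref{thm: res for ample is surjective} and the snake lemma to produce the short exact sequence with kernel $H^0(X,L\otimes I_{C/X})\otimes\Ocal_C$, and splits it exactly as you do. No gaps; the bookkeeping of $d$ and $e$ is as you describe.
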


\section{Hyperbolicity of adjoint bundles on spherical varieties} \label{sec:hyperbolicity-conjecture}

This whole section is devoted to the proof of our main theorem, Theorem \ref{main thm: hyperbolic if orbit closures are smooth}.
The key point is that the method of Coskun and Riedl \cite{CR2023} can be applied to our situation by the nefness of the twisted kernel bundles obtained in Lemma \ref{lem:positivity-of-kernel-bundle}.

In this section, we assume that $k$ is an algebraically closed field of characteristic 0 that is uncountable.

\subsection{Setup} \label{subsection: coskun-riedl}

First, we recall the technique of Coskun and Riedl \cite{CR2023} for proving algebraic hyperbolicity of very general hypersurfaces on almost homogeneous varieties (see also \cite{Ein1988, CR2004, MY2024}).

From now on, for a smooth irreducible complete curve $C$, we denote by $g(C)$ its genus.

\begin{defn} \label{defn: hyp}
    Let $S$ be a smooth projective variety.
    \begin{enumerate}
        \item We say that $S$ is \textit{algebraically hyperbolic} if there is an ample line bundle $A$ and a constant $\epsilon > 0$ such that for any generically injective morphism $f: C \rightarrow S$ from a smooth irreducible complete curve $C$, we have
    \begin{equation} \label{eqn:algebraic-hyperbolicity-2}
        2 g(C)-2 \geq \epsilon  \deg_C(f^{*}A).
    \end{equation}

    \item For a closed subvariety $Z \subsetneq S$, we say that $S$ is \emph{algebraically hyperbolic modulo $Z$} if there is an ample line bundle $A$ and a constant $\epsilon > 0$ such that the inequality (\ref{eqn:algebraic-hyperbolicity-2}) is satisfied by all $f: C \rightarrow S$ with $f(C) \not\subset Z$.
    \end{enumerate}
    
\end{defn}

\begin{notation} \label{notation:almost-hom}
Let $G$ be a linear algebraic group and $X$ a smooth projective $G$-variety.
Suppose that $X$ contains an open dense $G$-orbit $X^{\circ}$.
Let $A$ and $L$ be $G$-equivariant, globally generated and ample line bundles.
Let $S$ be a very general element of the linear system $|L|$, and without loss of generality, we assume that $S$ is smooth and irreducible.
Let $f:C \rightarrow S$ be a generically injective map from a smooth irreducible complete curve $C$.
\end{notation}

\begin{construction} \label{construction:hyp}
In Notation~\ref{notation:almost-hom}, let $\Scal_1 \rightarrow \Vcal \subset H^0(X,\,L)$ be the universal family of the zero loci of general sections of $L$.
By \cite[\S2.1]{CR2023}, we have a $G$-equivariant \'{e}tale morphism $\Ucal \rightarrow \Vcal$, a smooth family $\Ycal \rightarrow \Ucal$ of curves of genus $g(C)$, and a generically injective morphism $i : \Ycal \rightarrow \Scal := \Scal_{1} \times_{\Vcal} \Ucal$ whose fibers (over points in $\Ucal$) are deformations of $f : C \rightarrow S$.
We denote by $\pi_{1} : \Scal \rightarrow \Ucal$ and $\pi_{2} : \Scal \rightarrow X$ the natural projections.
For an element $t \in \Ucal$, let $C_t$ and $S_t$ be the fibers of $\Ycal$ and $\Scal$, respectively, and let $i_t:C_t \rightarrow S_t$ be the restriction of $i$ to $C_t$.
Define the normal sheaves $N_{i/\Scal}$ and $N_{i_{t}/S_{t}}$ as the cokernels of the injective maps $T_{\Ycal} \hookrightarrow i^{*}T_{\Scal}$ and $T_{C_{t}} \hookrightarrow i_{t}^{*} T_{S_{t}}$, respectively.
Similarly, define the vertical tangent sheaf $T_{\Scal/X}$ as the kernel of the surjection $T_{\Scal} \twoheadrightarrow \pi_{2}^{*}T_{X}$.
\end{construction}

\begin{lemma}[{\cite[Proposition~2.1]{CR2023}}] \label{lem:variational-method}
    In Notation~\ref{notation:almost-hom} and Construction~\ref{construction:hyp}, let $t \in \Ucal$ be a general element.
    \begin{enumerate}
        \item $T_{\Scal/X} \simeq \pi_2^{\ast} M_{L}$.
        \item $N_{i_t/S_t} \simeq N_{i/S}|_{C_t}$.
        \item If $f(C)$ meets $X^{\circ}$, then there exists a generic surjection $i_t^{\ast}T_{\Scal/X} \rightarrow N_{i_t/S_t}$, that is, a morphism between $\Ocal_{C_{t}}$-modules whose cokernel is a torsion sheaf.
    \end{enumerate}
\end{lemma}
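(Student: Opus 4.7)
The plan is to address the three parts in turn, using standard deformation-theoretic diagram chasing together with the $G$-equivariance of the construction for Part (3).

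For Part (1), I would realize $\Scal$ as the zero locus, inside $X \times \Ucal$, of the pullback of the tautological section $\sigma \in H^{0}(X \times \Vcal,\, \mathrm{pr}_{1}^{*} L)$ defining $\Scal_{1}$. Since $\Ucal \to \Vcal$ is \'etale onto an open subset of $H^{0}(X,\,L)$, the tangent sheaf $T_{\Ucal}$ is canonically trivialized as $H^{0}(X,\,L) \otimes \Ocal_{\Ucal}$. Global generation of $L$ forces the projection $\pi_{2} : \Scal \to X$ to be smooth, with fibre over $x \in X$ an open piece of the hyperplane of sections vanishing at $x$. The relative conormal sequence of $\Scal \hookrightarrow X \times \Ucal$ over $X$ is therefore short exact, and dualizing gives
\[
0 \to T_{\Scal/X} \to H^{0}(X,\,L) \otimes \Ocal_{\Scal} \xrightarrow{\,\mathrm{ev}\,} \pi_{2}^{*}L \to 0,
\]
where the right-hand map is the pullback of the evaluation map. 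Comparison with the defining sequence of $M_{L}$ yields $T_{\Scal/X} \simeq \pi_{2}^{*} M_{L}$.

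For Part (2), I would introduce the relative normal sheaf $N^{\mathrm{rel}} := \mathrm{coker}(T_{\Ycal/\Ucal} \to i^{*} T_{\Scal/\Ucal})$. Since $i : \Ycal \to \Scal$ is a morphism over $\Ucal$, the relative tangent sequences for the smooth morphisms $\Ycal \to \Ucal$ and $\Scal \to \Ucal$ fit into a commutative diagram with short exact rows whose rightmost column is the identity on the pullback of $T_{\Ucal}$; the snake lemma then identifies $N^{\mathrm{rel}}$ with $N_{i/\Scal}$. Smoothness also gives $T_{\Ycal/\Ucal}|_{C_{t}} = T_{C_{t}}$ and $T_{\Scal/\Ucal}|_{S_{t}} = T_{S_{t}}$ for any $t$, so right-exactness of restriction forces $N_{i/\Scal}|_{C_{t}} = N^{\mathrm{rel}}|_{C_{t}} = N_{i_{t}/S_{t}}$.

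For Part (3), the desired morphism is the composition $i_{t}^{*} T_{\Scal/X} \hookrightarrow i_{t}^{*} T_{\Scal}|_{C_{t}} \twoheadrightarrow N_{i/\Scal}|_{C_{t}} \simeq N_{i_{t}/S_{t}}$, using Part (2). The short exact sequence $0 \to T_{\Scal/X} \to T_{\Scal} \to \pi_{2}^{*} T_{X} \to 0$, exact because $\pi_{2}$ is smooth, shows this composition is generically surjective exactly when the differential $d(\pi_{2} \circ i) : T_{\Ycal} \to (\pi_{2} \circ i)^{*} T_{X}$ is generically surjective, equivalently when $\pi_{2} \circ i : \Ycal \to X$ is dominant (generic smoothness being automatic in characteristic zero). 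As the whole construction is $G$-equivariant, the image of $\pi_{2} \circ i$ is a $G$-stable subset containing $f(C)$; if $f(C) \cap X^{0} \neq \emptyset$, the image contains the orbit of any such point, hence contains $X^{0}$, which is dense in $X$. The main obstacle lies in Part (3): everything reduces to dominance of $\pi_{2} \circ i$, but verifying this rigorously depends on extracting the $G$-equivariance of $\Ycal \to \Ucal$ from the Hilbert-scheme style construction of \cite{CR2023}. Once that is in place, the remaining identifications are formal diagram chases.
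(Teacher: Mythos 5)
Your argument is correct, but note that the paper does not prove this lemma at all: it is quoted verbatim from Coskun--Riedl \cite[Proposition 2.1]{CR2023}, and your proposal is essentially a faithful reconstruction of their proof (the universal hypersurface as the total space of $M_L$ over $X$ for Part (1), the relative tangent/snake-lemma identification for Part (2), and the reduction of Part (3) to dominance of $\pi_2 \circ i$ via $G$-equivariance and the dense orbit $X^0$). The only point you flag as uncertain --- that $\Ycal \rightarrow \Ucal$ and $i$ are $G$-equivariant --- is exactly what the \'etale base change $\Ucal \rightarrow \Vcal$ in \cite[\S 2.1]{CR2023} is designed to provide, so there is no genuine gap.
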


\begin{rmk}
    In the proof of our main theorems, we shall consider hypersurfaces $S$ satisfying the properties in Lemma \ref{lem:variational-method} for all $f: C \rightarrow S$ meeting $X^{\circ}$.
    Note that $\Ucal$, parametrizing deformations of $f$, is depending on the genus of $C$ and the degree of $f(C)$, and hence our argument applies to very general hypersurfaces.
\end{rmk}

\subsection{Proof of Main Theorem \ref{main thm: hyperbolic if orbit closures are smooth}} \label{sec: Proof}

From now on, we use the following notation, which is also consistent with Notation~\ref{notation:almost-hom} and Construction~\ref{construction:hyp}:
\begin{notation} \label{notation:spherical}
Let $G$ be a connected reductive group.
We denote by $X$ a smooth projective spherical $G$-variety of dimension $n \ge 2$.
Let $X^{\circ}$ be the open dense $G$-orbit in $X$, and put $\partial X := X \setminus X^{\circ}$.
Let $N$ be a nef line bundle on $X$, and $A$ an ample line bundle on $X$.
Put $L := N \otimes A^{\otimes m}$ for $m \geq 2n$, and let $S \in |L|$ be a very general hypersurface.
By replacing $G$ by a finite cover, we may assume that $G$ is the product of a torus and a simply connected semi-simple Lie group so that $N$, $A$, and $L$ are $G$-equivariant.
Since $L$ is ample (and hence globally generated by Theorem \ref{thm:facts-on-spherical}(3)), a very general hypersurface $S \in |L|$ is smooth and irreducible.
\end{notation}

\begin{thm}\label{main thm: pseudo hyperbolic}
    In Notation~\ref{notation:spherical}, for a generically injective morphism $f : C \rightarrow S$ from a smooth irreducible complete curve $C$ to a very general element $S \in |L|$ such that $f(C) \cap X^{\circ} \not= \emptyset$, we have an inequality
    \begin{equation} \label{eqn:genus bound}
        2g(C) - 2 \ge (m-2n+1) \deg_{C}f^{*}A.
    \end{equation}
    In particular, if $m \ge 2n$, then $S$ is algebraically hyperbolic modulo $\partial X$.
\end{thm}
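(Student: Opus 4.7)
The plan is to apply the variational method of Coskun--Riedl (Lemma~\ref{lem:variational-method}) together with two positivity facts: the nefness of $M_L \otimes A$ (Lemma~\ref{lem:positivity-of-kernel-bundle}), and the nefness of $\omega_X \otimes A^{\otimes(n+1)}$ already invoked in (\ref{eqn:decomp adjoint}). Since $L$ is ample on the projective spherical variety $X$, it is globally generated by Theorem~\ref{thm:facts-on-spherical}(3), so a very general $S \in |L|$ is smooth and irreducible by Bertini. Fix $f \colon C \to S$ as in the statement and set $d := \deg_{C} f^{*}A$.

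Assume first $n \ge 3$. I would let $\tilde N$ denote the torsion-free quotient of $N_{f/S}$, a locally free sheaf of rank $n-2$ on $C$. Composing the generic surjection of Lemma~\ref{lem:variational-method}(3) with the quotient $N_{f/S} \to \tilde N$ gives a generic surjection $f^{*}M_{L} \to \tilde N$, so taking $(n-2)$-th exterior powers and twisting by $f^{*}A^{\otimes(n-2)}$ yields a non-zero map
\[
    \psi \colon \wedge^{n-2}(f^{*}M_{L} \otimes f^{*}A) \to \det \tilde N \otimes f^{*}A^{\otimes(n-2)}.
\]
The source is nef as an exterior power of the pullback of the nef bundle $M_{L} \otimes A$, so the image of $\psi$ is a nef line subsheaf of the target line bundle; since nef line bundles on a smooth curve have non-negative degree, this forces $\deg \det \tilde N \ge -(n-2)d$. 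Next, combining the normal sheaf sequence $0 \to T_{C}(R) \to f^{*}T_{S} \to \tilde N \to 0$ (with $R \ge 0$ the ramification divisor of $f$) with adjunction $\omega_{S} \simeq (\omega_{X} \otimes L)|_{S}$ gives $\det \tilde N \simeq \omega_{C}(-R) \otimes f^{*}\omega_{S}^{-1}$, hence
\[
    2g(C) - 2 = \deg \det \tilde N + \deg R + \deg f^{*}(\omega_{X} \otimes L) \ge \deg f^{*}(\omega_{X} \otimes N) + md - (n-2)d.
\]
Applying $\deg f^{*}\omega_{X} \ge -(n+1)d$ (from nefness of $\omega_{X} \otimes A^{\otimes(n+1)}$) and $\deg f^{*}N \ge 0$ (from nefness of $N$), I obtain $2g(C) - 2 \ge (m - 2n + 1)d$, as required.

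For the remaining case $n = 2$, any generically injective $f \colon C \to S$ between the smooth curves $C$ and $S$ is an isomorphism, so the bound follows directly from $\omega_{C} \simeq f^{*}(\omega_{X} \otimes L)$ together with the same two nefness facts. Since the inequality (\ref{eqn:genus bound}) then holds for every generically injective $f \colon C \to S$ with $f(C) \not\subset S \cap \partial X$ (equivalently, $f(C) \cap X^{0} \ne \emptyset$, as $f(C)$ is irreducible), and since $m - 2n + 1 \ge 1$ whenever $m \ge 2n$, this shows that $S$ is algebraically hyperbolic modulo $\partial X$ with $\epsilon = m - 2n + 1$ and ample line bundle $A|_{S}$. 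The main technical subtlety I expect lies in the bookkeeping around the torsion of $N_{f/S}$ and the ramification divisor $R$, and in the careful verification that a non-zero map from a nef vector bundle to a line bundle on a smooth curve indeed yields the asserted degree bound via its image.
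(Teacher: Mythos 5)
Your proposal is correct and follows essentially the same route as the paper: it decomposes $2g(C)-2$ as $\deg_C f^*\omega_S + \deg_C N_{f/S}$, bounds the first term by adjunction together with the nefness of $\omega_X\otimes A^{\otimes(n+1)}$ and of $N$, and bounds the second via the generic surjection $f^*M_L\to N_{f/S}$ from Lemma~\ref{lem:variational-method} and the nefness of $M_L\otimes A$ from Lemma~\ref{lem:positivity-of-kernel-bundle}. The only difference is presentational: you make explicit (via the torsion-free quotient, the exterior-power map, and the separate $n=2$ case) the standard fact that a sheaf on a smooth curve admitting a generic surjection from a nef bundle has non-negative degree, which the paper uses implicitly in (\ref{eqn: bound for normal sheaf}).
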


\begin{proof}
By the definition of the normal sheaf $N_{f/S}$,
\begin{equation} \label{eqn:genus and normal sheaf}
    2g(C)-2 = \deg_{C} \omega_C = \deg_{C} f^{*}\omega_S + \deg_{C} N_{f/S}.
\end{equation}
The adjunction formula and the fact that $\omega_X \otimes A^{\otimes (n+1)}$ is nef (\cite[Example 1.5.35]{LazarsfeldI}) imply that
\begin{equation} \label{eqn:bound-for-degree-of-canonical-sheaf}
    \deg_{C} f^{*}\omega_S = \deg_{C} f^{*}\omega_X + \deg_{C} f^{*}L \geq (m-n-1) \deg_{C} f^{*}A.
\end{equation}
Since there is a generic surjection $f^{*} M_{L} \rightarrow N_{f/S}$ (Lemma \ref{lem:variational-method}) and $M_{L} \otimes A$ is nef (Lemma \ref{lem:positivity-of-kernel-bundle}), we have
\begin{equation} \label{eqn: bound for normal sheaf}
    \deg_{C} N_{f/S} = \deg_{C} (N_{f/S} \otimes f^{\ast}A) - (n-2)\deg_{C} f^{*}A \geq -(n-2) \deg_{C} f^{*}A.
\end{equation}
Combining (\ref{eqn:genus and normal sheaf}--\ref{eqn: bound for normal sheaf}), we deduce the desired inequality (\ref{eqn:genus bound}).
\end{proof}

As a corollary, Conjecture \ref{conj: adjoint bundles} holds for $X$ modulo $\partial X$, in the following sense:

\begin{coro} \label{coro: conj modulo boundary}
    In Notation~\ref{notation:spherical}, if $m \ge 3n+1$, then very general elements of $|\omega_{X} \otimes A^{\otimes m}|$ are algebraically hyperbolic modulo $\partial X$.
\end{coro}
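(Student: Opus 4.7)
The plan is to reduce the corollary directly to Theorem \ref{main thm: pseudo hyperbolic} via the decomposition \eqref{eqn:decomp adjoint} that has already been pointed out in the introduction. Namely, I would set
\[
    N := \omega_{X} \otimes A^{\otimes (n+1)} \quad \text{and} \quad m' := m - (n+1),
\]
so that $\omega_{X} \otimes A^{\otimes m} \simeq N \otimes A^{\otimes m'}$. The nefness of $N$ is exactly \cite[Example 1.5.35]{LazarsfeldI}, already invoked twice earlier in the paper, so no extra work is needed there.

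Next I would check the numerical hypothesis: the assumption $m \ge 3n+1$ is equivalent to $m' \ge 2n$, which is precisely the bound required for Theorem \ref{main thm: pseudo hyperbolic} to apply to the pair $(N,\, A)$ and the line bundle $L = N \otimes A^{\otimes m'}$. Since $L \simeq \omega_{X} \otimes A^{\otimes m}$, a very general element of $|\omega_{X} \otimes A^{\otimes m}|$ is the same as a very general element of $|L|$, so Theorem \ref{main thm: pseudo hyperbolic} yields that such an element is algebraically hyperbolic modulo $\partial X$.

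There is no genuine obstacle here; the corollary is essentially a bookkeeping consequence, and the only thing worth double-checking is the equivalence $m - (n+1) \ge 2n \Leftrightarrow m \ge 3n+1$, which is immediate. In writing it up I would simply state the decomposition, note the nefness of $N$, verify the numerical inequality, and cite Theorem \ref{main thm: pseudo hyperbolic}.
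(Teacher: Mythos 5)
Your proposal is correct and is exactly the paper's argument: the proof of Corollary \ref{coro: conj modulo boundary} consists of citing Theorem \ref{main thm: pseudo hyperbolic} together with the decomposition \eqref{eqn:decomp adjoint}, which is precisely your choice of $N = \omega_X \otimes A^{\otimes(n+1)}$ and $m' = m-(n+1)$. The numerical check $m-(n+1)\ge 2n \Leftrightarrow m\ge 3n+1$ and the nefness of $N$ via \cite[Example 1.5.35]{LazarsfeldI} are the same details the paper relies on.
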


\begin{proof}
    The statement follows from Theorem \ref{main thm: pseudo hyperbolic} and the equation (\ref{eqn:decomp adjoint}) in Introduction.
\end{proof}

\begin{proof} [Proof of Theorem~\ref{main thm: hyperbolic if orbit closures are smooth}]
    Now assume that every $G$-orbit closure in $X$ is smooth.
    Let $S \in |L|$ be a very general hypersurface, and $f:C \rightarrow S$ a generically injective morphism from a smooth irreducible complete curve $C$.
    Let $Y$ be the smallest $G$-invariant closed subvariety containing $f(C)$.
    By Theorem \ref{thm:facts-on-spherical}(2), $Y$ is a projective spherical $G$-variety, which is smooth by the assumption.
    Moreover, the restriction map $H^0(X,\,L) \rightarrow H^0(Y,\,L|_Y)$ is surjective, and thus a very general element in $|L|$ restricts to a very general element in $|L|_Y|$.
    Since $C$ intersects the open $G$-orbit in $Y$, by Theorem~\ref{main thm: pseudo hyperbolic}, we have
    \[
        2g(C)-2 \geq (m-2\dim Y+1)\deg_{C} f^{*}A \ge (m-2n+1)\deg_{C} f^{*}A,
    \]
    and it follows that $S$ is algebraically hyperbolic.
\end{proof}


\begin{thebibliography}{FMSS95}

\bibitem[BI94]{BrionInamdar}
M.~Brion and S.~P. Inamdar.
\newblock Frobenius splitting of spherical varieties.
\newblock In {\em Algebraic groups and their generalizations: classical methods (University Park, PA, 1991)}, volume~56, Part~1 of {\em Proc. Sympos. Pure Math.}, pages 207--218. Amer. Math. Soc., Providence, RI, 1994.
\newblock \href{https://doi.org/10.1090/pspum/056.1/1278708}{\path{doi:10.1090/pspum/056.1/1278708}}.

\bibitem[BK05]{BK05}
M.~Brion and S.~Kumar.
\newblock {\em Frobenius splitting methods in geometry and representation theory}, volume~231 of {\em Progr. Math.}
\newblock Birkh\"auser Boston, Inc., Boston, MA, 2005.
\newblock \href{https://doi.org/10.1007/b137486}{\path{doi:10.1007/b137486}}.

\bibitem[Bri93]{Brion1993}
M.~Brion.
\newblock Vari\'et\'es sph\'eriques et th\'eorie de {M}ori.
\newblock {\em Duke Math. J.}, 72(2):369--404, 1993.
\newblock \href{https://doi.org/10.1215/S0012-7094-93-07213-4}{\path{doi:10.1215/S0012-7094-93-07213-4}}.

\bibitem[Bri97]{Brion97}
M.~Brion.
\newblock Curves and divisors in spherical varieties.
\newblock In {\em Algebraic groups and Lie groups}, volume~9 of {\em Austral. Math. Soc. Lect. Ser.}, pages 21--34. Cambridge Univ. Press, Cambridge, 1997.

\bibitem[Bri18]{Brion18}
M.~Brion.
\newblock Linearization of algebraic group actions.
\newblock In {\em Handbook of group actions. Vol.~IV}, volume~41 of {\em Adv. Lect. Math. (ALM)}, pages 291--340. Int. Press, Somerville, MA, 2018.

\bibitem[Cle86]{Clemens1986}
H.~Clemens.
\newblock Curves on generic hypersurfaces.
\newblock {\em Ann. Sci. \'Ecole Norm. Sup\'er. (4)}, 19(4):629--636, 1986.
\newblock \href{https://doi.org/10.24033/asens.1521}{\path{doi:10.24033/asens.1521}}.

\bibitem[CR04]{CR2004}
H.~Clemens and Z.~Ran.
\newblock Twisted genus bounds for subvarieties of generic hypersurfaces.
\newblock {\em Amer. J. Math.}, 126(1):89--120, 2004.
\newblock \href{https://doi.org/10.1353/ajm.2004.0003}{\path{doi:10.1353/ajm.2004.0003}}.

\bibitem[CR19]{CR2019}
I.~Coskun and E.~Riedl.
\newblock Algebraic hyperbolicity of the very general quintic surface in {$\mathbb{P}^3$}.
\newblock {\em Adv. Math.}, 350:1314--1323, 2019.
\newblock \href{https://doi.org/10.1016/j.aim.2019.04.062}{\path{doi:10.1016/j.aim.2019.04.062}}.

\bibitem[CR23]{CR2023}
I.~Coskun and E.~Riedl.
\newblock Algebraic hyperbolicity of very general surfaces.
\newblock {\em Israel J. Math.}, 253(2):787--811, 2023.
\newblock \href{https://doi.org/10.1007/s11856-022-2379-2}{\path{doi:10.1007/s11856-022-2379-2}}.

\bibitem[dCP83]{dCP83}
C.~de~Concini and C.~Procesi.
\newblock Complete symmetric varieties.
\newblock In {\em Invariant theory (Montecatini, 1982)}, volume~996 of {\em Lecture Notes in Math.}, pages 1--44. Springer, Berlin, 1983.
\newblock \href{https://doi.org/10.1007/BFb0063234}{\path{doi:10.1007/BFb0063234}}.



\bibitem[Dem70]{SGAIII}
M.~Demazure.
\newblock {\em Sch\'emas en groupes. {III}: {S}tructure des sch\'emas en groupes r\'eductifs}, volume Vol. 153 of {\em Lecture Notes in Math.}.
\newblock Springer-Verlag, Berlin-New York, 1970.
\newblock S\'eminaire de G\'eom\'etrie Alg\'ebrique du Bois Marie 1962/64 (SGA 3), Dirig\'e{} par M. Demazure et A. Grothendieck.
\newblock \href {https://doi.org/10.1007/BFb0059027} {\path{doi:10.1007/BFb0059027}}.




\bibitem[Dem97]{Demailly97}
J.-P. Demailly.
\newblock Algebraic criteria for {K}obayashi hyperbolic projective varieties and jet differentials.
\newblock In {\em Algebraic geometry—Santa Cruz 1995}, volume~62, Part~2 of {\em Proc. Sympos. Pure Math.}, pages 285--360. Amer. Math. Soc., Providence, RI, 1997.
\newblock \href{https://doi.org/10.1090/pspum/062.2/1492539}{\path{doi:10.1090/pspum/062.2/1492539}}.

\bibitem[Dem20]{Demailly20}
J.-P. Demailly.
\newblock Recent results on the {K}obayashi and {G}reen--{G}riffiths--{L}ang conjectures.
\newblock {\em Jpn. J. Math.}, 15(1):1--120, 2020.
\newblock \href{https://doi.org/10.1007/s11537-019-1566-3}{\path{doi:10.1007/s11537-019-1566-3}}.

\bibitem[Ein88]{Ein1988}
L.~Ein.
\newblock Subvarieties of generic complete intersections.
\newblock {\em Invent. Math.}, 94(1):163--169, 1988.
\newblock \href{https://doi.org/10.1007/BF01394349}{\path{doi:10.1007/BF01394349}}.

\bibitem[Ein91]{Ein1991}
L.~Ein.
\newblock Subvarieties of generic complete intersections. II.
\newblock {\em Math. Ann.}, 289(3):465--471, 1991.
\newblock \href{https://doi.org/10.1007/BF01446583}{\path{doi:10.1007/BF01446583}}.

\bibitem[FMSS95]{FMSS}
W.~Fulton, R.~MacPherson, F.~Sottile, and B.~Sturmfels.
\newblock Intersection theory on spherical varieties.
\newblock {\em J. Algebraic Geom.}, 4(1):181--193, 1995.

\bibitem[Gro66]{EGAIV3}
A.~Grothendieck.
\newblock \'El\'ements de g\'eom\'etrie alg\'ebrique : {IV.} {\'Etude} locale des sch\'emas et des morphismes de sch\'emas, {Troisi\`eme} partie.
\newblock {\em Publ. Math. Inst. Hautes \'Etudes Sci.}, 28:5--255, 1966.

\bibitem[Har77]{Hartshorne}
R.~Hartshorne.
\newblock {\em Algebraic geometry}, volume~52 of {\em Grad. Texts in Math.}
\newblock Springer-Verlag, New York--Heidelberg, 1977.
\newblock \href{https://doi.org/10.1007/978-1-4757-3849-0}{\path{doi:10.1007/978-1-4757-3849-0}}.

\bibitem[HI21]{HI2021}
C.~Haase and N.~Ilten.
\newblock Algebraic hyperbolicity for surfaces in toric threefolds.
\newblock {\em J. Algebraic Geom.}, 30(3):573--602, 2021.
\newblock \href{https://doi.org/10.1090/jag/770}{\path{doi:10.1090/jag/770}}.



\bibitem[Hir84]{Hirschowitz1984}
A.~Hirschowitz.
\newblock Le groupe de {C}how \'equivariant.
\newblock {\em C. R. Acad. Sci. Paris S\'er. I Math.}, 298(5):87--89, 1984.



\bibitem[HMP10]{HMP2010}
M.~Hering, M.~Musta\c t\u a, and S.~Payne.
\newblock Positivity properties of toric vector bundles.
\newblock {\em Ann. Inst. Fourier (Grenoble)}, 60(2):607--640, 2010.
\newblock \href{https://doi.org/10.5802/aif.2534}{\path{doi:10.5802/aif.2534}}.

\bibitem[Jan03]{Jantzen}
J.~C. Jantzen.
\newblock {\em Representations of algebraic groups}, volume~107 of
{\em Math. Surveys Monogr.}
\newblock American Mathematical Society, Providence, RI, second edition, 2003.

\bibitem[Laz04a]{LazarsfeldI}
R.~Lazarsfeld.
\newblock {\em Positivity in algebraic geometry. {I}}, volume~48 of {\em Ergeb. Math. Grenzgeb. (3). A Series of Modern Surveys in Mathematics [Results in Mathematics and Related Areas. 3rd Series. A Series of Modern Surveys in Mathematics]}.
\newblock Springer-Verlag, Berlin, 2004.
\newblock Classical setting: line bundles and linear series.
\newblock \href {https://doi.org/10.1007/978-3-642-18808-4} {\path{doi:10.1007/978-3-642-18808-4}}.

\bibitem[Laz04b]{LazarsfeldII}
R.~Lazarsfeld.
\newblock {\em Positivity in algebraic geometry. {II}}, volume~49 of {\em Ergeb. Math. Grenzgeb. (3). A Series of Modern Surveys in Mathematics [Results in Mathematics and Related Areas. 3rd Series. A Series of Modern Surveys in Mathematics]}.
\newblock Springer-Verlag, Berlin, 2004.
\newblock Positivity for vector bundles, and multiplier ideals.
\newblock \href {https://doi.org/10.1007/978-3-642-18810-7} {\path{doi:10.1007/978-3-642-18810-7}}.





\bibitem[Mio23]{Mioranci2023}
L.~Mioranci.
\newblock Algebraic hyperbolicity of very general hypersurfaces in homogeneous
varieties, 2023.
\newblock Preprint.
\newblock \href{https://arxiv.org/abs/2307.10461}{\path{arXiv:2307.10461}}.

\bibitem[MR85]{MR85}
V.~B. Mehta and A.~Ramanathan.
\newblock Frobenius splitting and cohomology vanishing for {S}chubert varieties.
\newblock {\em Ann. of Math. (2)}, 122(1):27--40, 1985.
\newblock \href{https://doi.org/10.2307/1971368}{\path{doi:10.2307/1971368}}.

\bibitem[MY24]{MY2024}
J.~Moraga and W.~Yeong.
\newblock A hyperbolicity conjecture for adjoint bundles, 2024.
\newblock Preprint.
\newblock \href{https://arxiv.org/abs/2412.01811}{\path{arXiv:2412.01811}}.

\bibitem[Pac03]{Pacienza2003}
G.~Pacienza.
\newblock Rational curves on general projective hypersurfaces.
\newblock {\em J. Algebraic Geom.}, 12(2):245--267, 2003.
\newblock \href{https://doi.org/10.1090/S1056-3911-02-00328-4}{\path{doi:10.1090/S1056-3911-02-00328-4}}.

\bibitem[Pas06]{pasquier-thesis}
B.~Pasquier.
\newblock {\em Vari\'et\'es horosph\'eriques de Fano}.
\newblock Th\`eses, Universit\'e Joseph-Fourier -- Grenoble~I, October 2006.
\newblock \href{https://theses.hal.science/tel-00116977}{\path{URL:theses.hal.science/tel-00116977}}.

\bibitem[Per14]{Perrin14}
N.~Perrin.
\newblock Spherical varieties and {W}ahl's conjecture.
\newblock {\em Ann. Inst. Fourier (Grenoble)}, 64(2):739--751, 2014.
\newblock \href{https://doi.org/10.5802/aif.2864}{\path{doi:10.5802/aif.2864}}.

\bibitem[Per18]{Perrin-sanya}
N.~Perrin.
\newblock Sanya lectures: geometry of spherical varieties.
\newblock {\em Acta Math. Sin. (Engl. Ser.)}, 34(3):371--416, 2018.
\newblock \href{https://doi.org/10.1007/s10114-017-7163-6}{\path{doi:10.1007/s10114-017-7163-6}}.

\bibitem[Seo25]{Seo2025}
H.~Seo.
\newblock Algebraic hyperbolicity of surfaces in Fano threefolds with Picard
number one, 2025.
\newblock Preprint.
\newblock \href{https://arxiv.org/abs/2502.06365}{\path{arXiv:2502.06365}}.

\bibitem[Tan12]{Tange12}
R.~Tange.
\newblock On embeddings of certain spherical homogeneous spaces in prime
characteristic.
\newblock {\em Transform. Groups}, 17(3):861--888, 2012.
\newblock \href{https://doi.org/10.1007/s00031-012-9185-6}{\path{doi:10.1007/s00031-012-9185-6}}.

\bibitem[Tan18]{Tange18}
R.~Tange.
\newblock Embeddings of spherical homogeneous spaces in characteristic {$p$}.
\newblock {\em Math. Z.}, 288(1--2):491--508, 2018.
\newblock \href{https://doi.org/10.1007/s00209-017-1897-9}{\path{doi:10.1007/s00209-017-1897-9}}.

\bibitem[Tim11]{Timashev}
D.~A. Timashev.
\newblock {\em Homogeneous spaces and equivariant embeddings}, volume~138 of
{\em Encyclopaedia Math. Sci.}
\newblock Springer, Heidelberg, 2011.
\newblock Invariant Theory and Algebraic Transformation Groups,~8.
\newblock \href{https://doi.org/10.1007/978-3-642-18399-7}{\path{doi:10.1007/978-3-642-18399-7}}.



\bibitem[Voi96]{Voisin1996}
C.~Voisin.
\newblock On a conjecture of {C}lemens on rational curves on hypersurfaces.
\newblock {\em J. Differential Geom.}, 44(1):200--213, 1996.
\newblock \href{https://doi.org/10.4310/jdg/1214458743}{\path{doi:10.4310/jdg/1214458743}}.

\bibitem[Voi98]{Voisin1998}
C.~Voisin.
\newblock A correction: ``{O}n a conjecture of {C}lemens on rational curves on hypersurfaces'' [{J}. {D}ifferential {G}eom. {\bf 44} (1996), no. 1, 200--213; {MR}1420353 (97j:14047)].
\newblock {\em J. Differential Geom.}, 49(3):601--611, 1998.
\newblock \href{https://doi.org/10.4310/jdg/1214461112}{\path{doi:10.4310/jdg/1214461112}}.



\bibitem[Yeo25]{Yeong2022}
W.~Yeong.
\newblock Algebraic hyperbolicity of very general hypersurfaces in products of
projective spaces.
\newblock {\em Israel J. Math.}, 266(1):1--24, 2025.
\newblock \href{https://doi.org/10.1007/s11856-024-2693-y}{\path{doi:10.1007/s11856-024-2693-y}}.



\end{thebibliography}
\end{document}